\documentclass[reqno]{amsart}

\usepackage{amsmath, amssymb, amsthm, amsfonts, color, mathrsfs}

\usepackage[dvipsnames]{xcolor}

\usepackage{ytableau, caption}

\usepackage[margin = 1.4in]{geometry}
\usepackage{adjustbox}

\input xy
\xyoption{all}

\usepackage{hyperref}

\usepackage{tikz-cd}

\numberwithin{equation}{subsection}

\newtheorem{thm}[subsubsection]{Theorem}
\newtheorem{lemma}[subsubsection]{Lemma}

\theoremstyle{definition}
\newtheorem{defn}[subsubsection]{Definition}

\newtheorem{remark}[subsubsection]{Remark}
\newtheorem{example}[subsubsection]{Example}

\usepackage[utf8]{inputenc}
\usepackage[T1]{fontenc}
\usepackage{xcolor} 
\usepackage[most]{tcolorbox} 

\definecolor{humanbubble}{RGB}{235, 245, 255} 
\definecolor{aibubble}{RGB}{245, 245, 245}    
\definecolor{loggray}{RGB}{230, 230, 230}
\definecolor{indexbg}{RGB}{248, 248, 248}

\newtcolorbox{interactionlog}[2][]{
  enhanced,
  arc=0pt, outer arc=0pt,
  colback=white, colframe=black!60,
  boxrule=0.8pt,
  fonttitle=\bfseries\sffamily, coltitle=black, colbacktitle=loggray,
  title={Human-AI Interaction Card \if\relax\detokenize{#1}\relax\else for #1\fi},
  halign title=center, attach title to upper,
  after title={\vspace{4pt}\hrule\vspace{10pt}},
  lower separated=true,
  segmentation style={solid, black!60, line width=0.8pt},
  colbacklower=indexbg,
  after upper={\par\vfill
    \begin{tcolorbox}[
      enhanced, colback=indexbg, colframe=white, boxrule=0pt,
      top=0pt, bottom=0pt, fontupper=\footnotesize\sffamily,
      title=Raw prompts and outputs, coltitle=black!70, attach title to upper,
      after title={:\enskip}, sharp corners
    ]
    #2 
    \end{tcolorbox}
  }
}

\newcommand{\human}[1]{%
  \noindent\begin{flushright}
    \begin{minipage}[c]{0.70\textwidth}
      \begin{tcolorbox}[
        enhanced,
        colback=humanbubble, colframe=black!15,
        arc=6pt, sharp corners=southeast, boxrule=0.5pt,
        left=6pt, right=6pt, top=4pt, bottom=4pt, boxsep=0pt
      ]\small #1\end{tcolorbox}
    \end{minipage}%
    \hspace{8pt}
    \begin{minipage}[c]{40pt}
      \footnotesize\sffamily\textbf{Human}
    \end{minipage}
  \end{flushright}
}

\newcommand{\ai}[2]{%
  \noindent\begin{flushleft}
    \begin{minipage}[c]{45pt}
      \footnotesize\sffamily\textbf{#1}
    \end{minipage}%
    \hspace{2pt}
    \begin{minipage}[c]{0.70\textwidth}
      \begin{tcolorbox}[
        enhanced,
        colback=aibubble, colframe=black!15,
        arc=6pt, sharp corners=southwest, boxrule=0.5pt,
        left=6pt, right=6pt, top=4pt, bottom=4pt, boxsep=0pt
      ]\small #2\end{tcolorbox}
    \end{minipage}
  \end{flushleft}
}

\def\BB{\mathbb{B}}

\def\VV{\mathbb{V}}

\def\XX{\mathbb{X}}

\newcommand\cD{\mathcal{D}}
\newcommand\cE{\mathcal{E}}

\newcommand\cZ{\mathcal{Z}}

\newcommand\frR{\mathfrak{R}}

\newcommand{\sL}{{\mathscr{L}}}

\newcommand{\Bun}{\textup{Bun}}

\newcommand\ev{\textup{ev}}

\newcommand{\Gr}{\textup{Gr}}

\newcommand\Sht{\textup{Sht}}

\newcommand\Sym{\textup{Sym}}

\newcommand{\vol}{\textup{vol}}

\DeclareMathOperator\Pf{Pf}

\newcommand\GL{\textup{GL}}

\newcommand\SO{\textup{SO}}

\DeclareMathOperator\Spin{Spin}

\renewcommand{\j}[1]{\langle{#1}\rangle}

\newcommand\quash[1]{}

\newcommand{\ov}{\overline}

\newcommand{\mf}[1]{\mathfrak{#1}}

\newcommand{\pderiv}[2]{\frac{\partial #1}{\partial #2}}

\renewcommand\c\circ

\newcommand{\ari}{\ar@{^{(}->}} 

\renewcommand\a\alpha
\renewcommand\b\beta
\newcommand\g\gamma
\newcommand\G\Gamma
\renewcommand\d\delta
\newcommand\D\Delta

\renewcommand\r\rho

\newcommand{\y}{\eta}

\usepackage{color}

\newcommand{\Aletheia}{\emph{Aletheia}}

\newcommand{\F}{\mathbf{F}}

\newcommand{\Q}{\mathbf{Q}}

\newcommand{\ol}[1]{\overline{#1}}

\newcommand{\co}{\colon}

\DeclareMathOperator{\Id}{Id}

\RequirePackage{xspace}

\newcommand{\rH}{\ensuremath{\mathrm{H}}\xspace}

\DeclareMathOperator{\PSO}{PSO}
\DeclareMathOperator{\PSp}{PSp}
\DeclareMathOperator{\SSYT}{SSYT}
\DeclareMathOperator{\len}{len}
\DeclareMathOperator{\height}{height}

\title[]{Eigenweights for arithmetic Hirzebruch Proportionality}

\dedicatory{}
\author[Tony Feng]{Tony Feng}
        \thanks{The mathematical content of this paper was fully generated by a math-research agent powered by Gemini Deep Think, internally codenamed {\Aletheia} at Google DeepMind (see \S \ref{sec:origin-story}). Aside from building \Aletheia, the author's contribution was only to rewrite the mathematical content into paper form.}
\address{Department of Mathematics, University of California Berkeley, Berkeley, CA 94720}
\email{fengt@berkeley.edu}

\begin{document}

\begin{abstract}
Prior work of Feng--Yun--Zhang \cite{FYZ4} established a (Higher) Arithmetic Hirzebruch Proportionality Principle, expressing the arithmetic volumes of  moduli stacks of shtukas in terms of differential operators applied to $L$-functions. This formula involves certain ``eigenweights'' which were calculated in simple cases by Feng--Yun--Zhang, but not in general. We document work of a (custom) AI Agent built upon Gemini Deep Think, which employs tools from algebraic combinatorics to connect these eigenweights to the representation theory of symmetric groups, and then determines them for all classical groups.
\end{abstract}

\maketitle

\tableofcontents

\section{Introduction} 
\subsection{Arithmetic Hirzebruch Proportionality} The famous \emph{Hirzebruch Proportionality Principle} \cite{Hir58} expresses the Chern numbers of an automorphic vector bundle on a compact locally symmetric space as a multiple of the corresponding Chern numbers on the compact dual variety, with proportionality constant given by the value of an $L$-function. Mumford \cite{Mum77} generalized Hirzebruch's formula to non-compact locally symmetric spaces. 

Recently, joint work of the author with Yun and Zhang in \cite{FYZ4} investigated an extension of this principle, which we call ``Arithmetic Hirzebruch Proportionality''. It relates the ``arithmetic volume'' of Chern classes on arithmetic moduli spaces to \emph{differential operators} applied to an $L$-function. This principle manifests in both the number field setting (where it is closely related to classical Hirzebruch(--Mumford) Proportionality) and the function field setting. The precise differential operator appearing in Arithmetic Hirzebruch Proportionality is governed by certain fundamental structure constants that we call \emph{eigenweights}. These were calculated for some examples in \cite{FYZ4}, and the purpose of this paper is to determine them in general.  

\subsubsection{An example of Arithmetic Hirzebruch Proportionality}  For concreteness, we illustrate a special case of the main result of \cite{FYZ4}. Let $X$ be a smooth, projective, geometrically connected curve of genus $g$ over $\F_q$, and $G$ be a semisimple reductive group of rank $n$ over $X$. To $G$ we can associate a multivariable $L$-function $\sL_{X, G}(s_1, \ldots, s_n)$; when $s_1 = \ldots = s_n = s$, this recovers the ``$L$-function of the motive of $G$'' in the sense of Gross \cite{Gro97}.

Let $\mu$ be a minuscule dominant coweight of $G$, $r \geq 0$, and $\Sht_G^{\mu^r}$ be the moduli stack of $G$-shtukas on $X$ with $r$ legs of type $\mu$. For $P_\mu$ the corresponding parabolic subgroup of $G$, pick a cohomology class $\eta \in \rH^{2N}(\BB P_\mu)$ where $N = \dim G/P_\mu + 1$. The moduli stack $\Sht_G^{\mu^r}$ carries $r$ tautological ``Hodge'' bundles, and upon evaluating $\eta$ on them we obtain tautological characteristic classes $\ev_i^*(\eta)$ for $i=1, \ldots, r$. Then $\prod_{i=1}^{r}(\ev_{i}^{*}\eta)$ is a top cohomology class on $\Sht^{\mu^r}_G$, and \cite[Theorem 5.5.9]{FYZ4} implies that
\begin{equation}\label{eq:arithmetic-volume}
\vol(\Sht^{\mu^r}_{G}, \prod_{i=1}^{r}\ev_{i}^{*}\y)= \# \pi_0(\Bun_G) \cdot q^{(g-1) \dim G} \cdot \mf d^r \Big|_{s_1=\ldots = s_n = 0} \sL_{X,G}(s_1, \ldots, s_n)
\end{equation}
where $\mf d$ is the differential operator
\begin{equation}\label{eq:diff-op}
\mf d = (- \log q)^{-1} \sum_{i=1}^n \epsilon_i(\eta, \mu) \pderiv{}{s_i}.
\end{equation}
 The weighting constants $\epsilon_i(\eta, \mu)$ in \eqref{eq:diff-op} are what we call \emph{eigenweights}, as they are computed as the eigenvalues of a certain ``local'' operator $\ol \nabla^\eta_\mu$. 

\begin{remark}
The original motivation for this direction of research was to extend the \emph{Higher Siegel--Weil formula} of \cite{FYZ} to the ``singular terms''. For example, when $G$ is a unitary group, part of the special cycle  $\cZ_{\cE}^r(0)$ constructed in \cite[\S 4]{FYZ2} can be interpreted as an arithmetic volume, and the matching derivative of the $L$-function is part of the constant term of the Siegel--Eisenstein series. In fact, such arithmetic volumes appear in more general singular coefficients; the corank-one Higher Siegel--Weil formula recently established in \cite{FHM25} crucially uses a special case of the result \eqref{eq:arithmetic-volume}.
\end{remark}

\subsubsection{Arithmetic volume of Shimura varieties} 
In the number field setting, \emph{arithmetic volume} is interpreted as an Arakelov degree of automorphic vector bundles on Shimura varieties. This has been computed in special cases, such as in \cite{BH24, Koh05, Hor14}. Our perspective gives a uniform explanation for the constants arising in the resulting formulas (e.g., \cite[Conjecture 5.8]{Koh05}). We expect to revisit this in a future work (with Yun and Zhang). 

\subsection{Definition of eigenweights}
Let $G$ be a split reductive group over $\Q$, $T \subset G$ be a split maximal torus, and $\XX^*(T)$ its character group. All cohomology groups will be Betti cohomology formed with rational coefficients. Consider the graded ring 
\[
R := \rH^*(\BB T) = \Sym_{\Q}(\XX^*(T)_{\Q})
\]
where $\XX^*(T)$ is placed in degree $2$. This carries an obvious action of the Weyl group $W$, and the invariant ring $R^W$ identifies with $\rH^*(\BB G)$. Let $I \subset R^W$ be the augmentation ideal, and $\VV := I/I^2$ be the first associated graded for the augmentation filtration on $R^W$. This is a slight renormalization of Gross's ``motive of $G$'' \cite{Gro97}; see \cite[Remark 4.1.9]{FYZ4} for the comparison. The \emph{eigenweights} are defined in \cite[\S 5.3]{FYZ4} as the collection of eigenvalues for a certain operator $\ov \nabla^\eta_\mu$ on $\VV$; to keep this paper self-contained, we proceed to recall the definition. 

Let $\mu \in \XX_*(T)$ be a minuscule coweight of $G$, and $L_\mu \subset P_\mu$ be the associated Levi and parabolic subgroups of $G$. Let $W_\mu$ be the Weyl group of $L_\mu$. The map $\BB P_\mu \rightarrow \BB G$ is a fiber bundle for $G/P_\mu$, so it induces a pushforward map on cohomology, which we denote
$$
\int_{G/P_\mu}  : R^{W_\mu} = \rH^*(\BB P_\mu) \rightarrow \rH^*(\BB G) = R^W.
$$

Since the coweight $\mu$ can be viewed in $\XX_*(T)$ while $R = \mathrm{Sym}_{\Q}(\XX^*(T)_{\Q})$, the partial derivative $\partial_\mu \colon R \rightarrow R$ is defined, and it carries $R^W \rightarrow R^{W_\mu}$.

\begin{defn}
Let $\eta \in R^{W_\mu}$. The associated operator $\ol {\nabla}^\eta_\mu$ on $\VV$ is defined by the formula
$$
\ol {\nabla}^\eta_\mu(f) = \int_{G/P_\mu} \eta \cdot  \partial_{\mu} (f).
$$
The \emph{eigenweights} are the eigenvalues of the operator $\ol {\nabla}^\eta_\mu$ on $\VV$. 
\end{defn}

\begin{remark}\label{rem:Omega-eigenweight}
There is a particular format of $\eta$ which is most natural for the application to arithmetic volumes in \cite{FYZ4}, and is therefore the only type of $\eta$ that we consider. Namely, we begin with a Casimir element $\Omega \in \Sym^2(\XX^*(T)_{\Q})^W$; for example, if $G$ is semisimple then $\Omega$ is a $\Q$-multiple of the Killing form (restricted to $\XX^*(T)_{\Q}$). Then we set $t_\mu := \partial_\mu (\Omega)$, and $\eta := t_\mu^{1 + \dim G/P_\mu}$. (The exponent is arranged so that $\ol {\nabla}^\eta_\mu$ preserves the grading, otherwise the eigenweights would automatically vanish). In this situation, we write $\epsilon_k(\Omega, \mu)$ for the eigenweights, indexed by a basis of eigenvectors for $\VV$.\footnote{Note that this notation differs from \cite{FYZ4}, where the eigenweights are instead denoted $\epsilon_k(\eta, \mu)$.}
\end{remark}

\subsection{Calculation of eigenweights} In \cite{FYZ4}, the eigenweights are calculated in several examples. For partitions or coweights, we use the notation
\[
(\ldots, \pi_i^{e_i}, \ldots) = (\ldots, \underbrace{\pi_i, \ldots, \pi_i}_{e_i \text{ times}}, \ldots).
\]
For groups of rank $n$, we have $R = \Q[x_1, \ldots, x_n]$ and we take the Casimir element 
\[
\Omega := \frac{1}{2} \sum_{i=1}^n x_i^2 \in R^W.
\]
As we are interested in closed form descriptions of eigenweights, we do not discuss exceptional groups; among them, only $E_6$ and $E_7$ admit minuscule coweights, and in those cases the eigenweights can be calculated by a finite algorithm.

\subsubsection{Type A} Let $G = \GL_n$. The minuscule coweights are $(1^m, 0^{n-m})$ for $1 \leq m < n$. A basis for $\VV$ consists of the power sums $p_1, \ldots, p_n$ in the $\{x_1, \ldots, x_n\}$, and we denote $\epsilon_k(\Omega, \mu)$ the eigenweight associated to $p_k$. 

 For $n \geq 2$ and $\mu = (1, 0^{n-1})$,\cite[Proposition 4.4.1]{FYZ4} shows that 
\begin{equation}\label{eq:FYZ-epsilon-1}
\epsilon_k(\Omega, \mu) = (-1)^{n-1} \text{ for $k=1, 2, \ldots, n$}.
\end{equation}
For $n \geq 3$ and $\mu = (1^2, 0^{n-2})$, \cite[Proposition 5.4.1]{FYZ4} shows that 
\begin{equation}\label{eq:FYZ-epsilon-2}
\epsilon_k(\Omega, \mu) = \frac{1}{n} \binom{2n-2}{n-1}  - \binom{2n-3}{n-k} + 2 \binom{2n-3}{n-k-1} - \binom{2n-3}{n-k-2} \text{ for $k=1, 2, \ldots, n$}.
\end{equation}
For $\mu = (1^m, 0^{n-m})$ with $m \geq 3$, \cite{FYZ4} did not calculate the eigenweights in closed form. This is addressed by Theorem \ref{thm:A-main} below. We shall see that the general results are much more complex than the examples treated already in \cite{FYZ4}. However, there \emph{is} a clean, uniform answer in terms of the representation theory of symmetric groups. To formulate it, recall that for the symmetric group $S_r$, 
both \begin{itemize}
\item irreducible representations of $S_r$, and \item conjugacy classes of $S_r$
\end{itemize}
are naturally indexed by partitions of $r$. For a partition $\pi$ of $r$, let $\chi^\pi$ be the character of the corresponding irreducible representation. For another partition $\nu$ of $r$, we write $\chi^\pi(\nu)$ for the value of $\chi^\pi$ on the conjugacy class of $S_r$ indexed by $\nu$. Below, addition of partitions is entrywise, with the shorter partition padded by zeros. 

\begin{thm}\label{thm:A-main}
Let $G = \GL_n$ and fix $1 \leq m < n$. Consider the minuscule coweight $\mu = (1^m, 0^{n-m})$. Let $N = m(n-m)+1$ be the arithmetic dimension of $G/P_\mu$. For $\Omega := \frac{1}{2} \sum_{i=1}^n x_i^2$, the eigenweights are\footnote{As usual, the use of exponents in partitions indicates repetitions, e.g., $(n-m)^m$ means $(\underbrace{n-m, n-m, \ldots, n-m}_{m \text{ times}})$.}
    \begin{equation}\label{eq:A-eigenvalues} 
\epsilon_k(\Omega, \mu) = (-1)^{N-1} \Lambda_k \sum_{j=0}^{\min(k-1, m-1)} (-1)^j \chi^{\pi_j(k) + (n-m)^m}(\nu_k ) \quad \text{ for $k = 1, \ldots, n$}
    \end{equation}
    where $\pi_j(k)$ is the partition $(k-j, 1^j)$ and $\nu_k$ is the partition $(k-1, 1^{N}) $, and $\Lambda_k := \begin{cases} m  & k=1, \\ 1 & k > 1. \end{cases}$ 
    \end{thm}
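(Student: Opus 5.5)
We first make the operator explicit in type A. Here $R=\Q[x_1,\dots,x_n]$, $W=S_n$, and $W_\mu=S_m\times S_{n-m}$. With $\Omega=\frac12\sum x_i^2$ one gets $t_\mu=\partial_\mu\Omega=x_1+\dots+x_m$, and $\eta=t_\mu^{N}$ with $N=m(n-m)+1$. The pushforward $\int_{G/P_\mu}$ is the Grassmannian Gysin map
\[
\int_{G/P_\mu} F=\sum_{w\in S_n/W_\mu} w\!\left(\frac{F}{\prod_{i\le m<j}(x_i-x_j)}\right).
\]
Since $\partial_\mu p_k=k\,p_k(x_1,\dots,x_m)$, we have $\ol\nabla^\eta_\mu(p_k)=k\int (x_1+\dots+x_m)^N p_k(x_1,\dots,x_m)$, which has degree $k$. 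The key point is that $\VV=I/I^2$ in degree $k$ is one-dimensional, spanned by the class of $p_k$. So $\ol\nabla^\eta_\mu$ is diagonal in the basis $p_k$, and the eigenweight is the coefficient of $p_k$ in this pushforward, computed modulo decomposables.

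To extract that coefficient, we pair with the functional that kills $I^2$ on degree-$k$ symmetric functions. Writing a degree-$k$ symmetric function in the power sum basis, the $p_k$-coefficient is a linear functional vanishing on products. Equivalently, we expand in Schur functions $s_\lambda$ and use $s_\lambda\equiv \chi^\lambda((k))\,p_k/k$ modulo $I^2$. Here $\chi^\lambda((k))$ is nonzero only for hooks $\lambda=(k-j,1^j)$, where it equals $(-1)^j$. This accounts for the hooks $\pi_j(k)$ and the signs $(-1)^j$ in \eqref{eq:A-eigenvalues}.

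The main computation is then the following.

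\begin{enumerate}
\item \emph{Integrand in Schur functions.} Express $(x_1+\cdots+x_m)^{N}\,p_k(x_1,\dots,x_m)$ in terms of Schur polynomials $s_\alpha(x_1,\dots,x_m)$. We have $p_1^N=\sum_\alpha f^\alpha s_\alpha$. Multiplying by $p_k$ adds border strips via Murnaghan--Nakayama, so the coefficient of $s_\alpha$ is $\chi^\alpha((k,1^N))$, which is the character value at $\nu_k$ up to ordering of parts. The factor $k$ in front is absorbed by the normalization of $p_k$ and $\Lambda_k$.
\item \emph{Pushforward of Schur classes.} Use the standard formula for the Grassmannian Gysin map: $\int s_\alpha(x_1,\dots,x_m)\,s_\beta(x_{m+1},\dots,x_n)$ equals, up to the sign $(-1)^{m(n-m)}$, the Schur function of the partition obtained by placing the $m\times(n-m)$ rectangle. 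Here only $\beta=\emptyset$ occurs, and $\int s_\alpha=\pm s_{\alpha-(n-m)^m}$, which vanishes unless $\alpha\supseteq (n-m)^m$ with the appropriate straightening. This produces the shape $\pi_j(k)+(n-m)^m$ and the global sign $(-1)^{N-1}$.
\item \emph{Restriction to at most $m$ rows.} Since $\alpha$ has at most $m$ rows, the hook $\pi_j(k)$ must have at most $m$ rows, which gives $j\le m-1$. Together with $j\le k-1$ this yields the range of the sum.
\item \emph{The case $k=1$.} Treat separately: $\partial_\mu p_1=m$, which produces $\Lambda_1=m$. For $k>1$ the normalizations produce $\Lambda_k=1$.
\end{enumerate}

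The main obstacle is the bookkeeping of signs and normalizations: the orientation of the Gysin map, the factor $k$ against the hook value $\chi(k)$, and matching $(k,1^N)$ with $\nu_k=(k-1,1^N)$. For the last, multiplying by $p_k$ adds a $k$-strip while the total size must equal $N+k$. It may be necessary to use $\partial_\mu p_k=k\,p_{k-1}$-type derivative conventions, for instance if $\VV$ is normalized differently.

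Once these conventions are fixed, we would verify them against the known cases \eqref{eq:FYZ-epsilon-1} and \eqref{eq:FYZ-epsilon-2}, namely $m=1$ and $m=2$, and then carry out the general Murnaghan--Nakayama calculation.
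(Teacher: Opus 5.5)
Your architecture matches the paper's: expand $\eta\,\partial_\mu p_k$ in Schur polynomials of $A=\{x_1,\dots,x_m\}$ via the Frobenius formula, push forward by rectangle removal, reduce modulo $I^2$ using $s_\pi\equiv\chi^\pi((k))\,p_k/k$, and keep only hooks. However, your first computation is wrong, and you leave the resulting mismatch unresolved, blaming it on conventions. Differentiation lowers degree: $\partial_\mu p_k(X)=\sum_{i\le m}\partial_{x_i}\sum_j x_j^k=k\,p_{k-1}(A)$, not $k\,p_k(A)$. No normalization of $\VV$ enters. With your formula the integrand $k\,p_{(k,1^N)}(A)$ has degree $N+k$. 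The Gysin map lowers degree by $m(n-m)=N-1$, so $\ol{\nabla}^\eta_\mu(p_k)$ would land in degree $k+1$, not $k$ as you claim. There is then no coefficient of $p_k$ to extract, and the characters would be evaluated at $(k,1^N)\vdash N+k$, which is not $\nu_k\vdash N+k-1$ under any reordering. With the correct derivative, $\eta\,\partial_\mu p_k=k\,p_1(A)^N p_{k-1}(A)=k\Lambda_k\,p_{\nu_k}(A)$, and $\Lambda_1=m$ comes from $p_0(A)=m$. That is what your step 4 actually uses, contradicting your general formula. The leading $k$ cancels against $z_{(k)}^{-1}=1/k$ in the reduction $s_\pi\equiv\chi^\pi((k))p_k/k$; it has nothing to do with $\Lambda_k$.

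Your Gysin formula also does not produce the sign you claim. You divide by $\prod_{i\le m<j}(x_i-x_j)$. Since $\Delta(X)=\Delta(A)\Delta(B)\prod_{i\le m<j}(x_i-x_j)$, antisymmetrizing $a_{\alpha+\delta_m}(A)\Delta(B)$ over $S_n/W_\mu$ then gives $\int_{G/P_\mu}s_\alpha(A)=a_\gamma(X)/\Delta(X)=s_{\alpha-((n-m)^m)}(X)$ with no sign. For $m=1$ this would yield $\epsilon_k=+1$ rather than $(-1)^{n-1}$. The localization formula of \cite[Lemma 5.4.4]{FYZ4} divides by $\frR_\mu=\prod_{\langle\beta,\mu\rangle<0}\beta=\prod_{i\le m<j}(x_j-x_i)=(-1)^{m(n-m)}\prod_{i\le m<j}(x_i-x_j)$, and this is the only source of $(-1)^{N-1}$. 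No ``straightening'' is needed either. Here $\gamma=(\alpha+\delta_m,\delta_{n-m})$ has a repeated entry exactly when $\alpha_m<n-m$, so the integral simply vanishes unless $\alpha\supseteq((n-m)^m)$. With these corrections, your steps 1--3 become the paper's proof.
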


\begin{remark}
For any specific choice of parameters, the quantity \eqref{eq:A-eigenvalues} is easy to compute algorithmically using the Murnaghan--Nakayama Rule \cite[\S 7.17]{Stan2} (recalled in \S \ref{ssec:MN-rule}). In particular, we spell out in \S \ref{ssec:a-special-cases} how to recover the results \eqref{eq:FYZ-epsilon-1} and \eqref{eq:FYZ-epsilon-2} of \cite{FYZ4} for the special cases $m=1$ and $m=2$.
\end{remark}

\subsubsection{Type B} For $G = \SO_{2n+1}$, there is a unique minuscule coweight, represented by $\mu = (1, 0^{n-1})$. A basis for $\VV$ consists of the power sum polynomials $p_1, \ldots, p_n$ in the $\{x_1^2, \ldots, x_n^2\}$, and we denote $\epsilon_k(\Omega, \mu)$ the eigenweight associated to $p_k$. In \cite[\S 5.2]{FYZ4}, the eigenweights for $\Omega = \frac{1}{2}\sum_{i=1}^n x_i^2$ are calculated to be
\[
\epsilon_k(\Omega, \mu) = -4 \text{ for } k= 1, 2, \ldots, n. 
\]

\subsubsection{Type C} For $G = \PSp_{2n}$, there is a unique minuscule spin coweight, corresponding to the spin representation of the dual group $\Spin_{2n+1}$. For this $\mu$, \cite{FYZ4} did not calculate the associated eigenweights. This is addressed by our next theorem. A basis for $\VV$ consists of the power sum polynomials $p_1^{(2)}, \ldots, p_n^{(2)}$ in the $\{x_1^2, \ldots, x_n^2\}$, and we denote $\epsilon_k(\Omega, \mu)$ the eigenweight associated to $p_k^{(2)}$.

\begin{thm}\label{thm:C-main}
Fix $n \geq 2$ and let $G=\PSp_{2n}$, and $\rho_n$ be the partition $(n, n-1, \ldots, 1)$. Let $\mu$ be the minuscule (spin) coweight of $G$ and $N = \binom{n+1}{2}+1$ be the arithmetic dimension of $G/P_\mu$. For $\Omega := \frac{1}{2}\sum_{i=1}^n x_i^2$, the eigenweights are
\begin{equation}\label{eq:C-eps-k}
\epsilon_k(\Omega, \mu) =  (-1)^{N-1} 2^{-N} \sum_{j=0}^{k-1} (-1)^j \chi^{2\pi_j(k)+\rho_n}(\nu_k) \quad \text{for $k=1, 2, \ldots, n$},
\end{equation}
where $\pi_j(k) = (k-j, 1^j)$ as in Theorem \ref{thm:A-main}, and $\nu_k = (2k-1, 1^N)$.
\end{thm}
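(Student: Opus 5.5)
We compute $\ol\nabla^\eta_\mu$ directly on the power-sum basis $p_k^{(2)}=\sum_i x_i^{2k}$ of $\VV$, in three steps: a differentiation, a localization, and a symmetric-function extraction.

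\textbf{Step 1: the data for the spin coweight.} In the coordinates $R=\Q[x_1,\ldots,x_n]$, the spin coweight is $\mu=(\tfrac12,\ldots,\tfrac12)$ (up to the normalization of the coweight lattice of $\PSp_{2n}$). The Levi is $L_\mu=\GL_n$ and $W_\mu=S_n$, and $G/P_\mu$ is the Lagrangian Grassmannian, of dimension $\binom{n+1}{2}=N-1$. We then have
\[
t_\mu=\partial_\mu\Omega=\tfrac12 p_1, \qquad \partial_\mu p_k^{(2)}=k\sum_i x_i^{2k-1}.
\]
Hence
\[
\ol\nabla^\eta_\mu(p_k^{(2)})=k\,2^{-N}\int_{G/P_\mu} p_1^{N}\,p_{2k-1}.
\]
Here the power sums are taken in the $x_i$, and we regard them as elements of $R^{S_n}$.

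\textbf{Step 2: the pushforward as a divided difference.} By Atiyah--Bott localization, $\int_{G/P_\mu}$ equals
\[
\sum_{w\in W/W_\mu} w\!\left(\frac{f}{\prod_{i\le j}(x_i+x_j)}\right),
\]
with signs and normalization fixed by orientation. The denominator runs over the roots in the unipotent radical, namely $x_i+x_j$ for $i<j$ and $2x_i$. Lifting to the full Weyl group $W(C_n)$ makes this a ratio of antisymmetrizations. Since $\prod_{i<j}(x_i-x_j)$ is $S_n$-alternating, we multiply through by it and get
\[
\int_{G/P_\mu} f=\frac{\mathrm{Alt}_{W(C_n)}\bigl(f\cdot\prod_{i<j}(x_i-x_j)\bigr)}{\Delta_{C_n}},
\]
with $\Delta_{C_n}=\prod_i x_i\prod_{i<j}(x_i^2-x_j^2)$ up to a constant. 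This expression is exactly the form in which Schur $Q$- and $P$-functions arise; equivalently, symplectic characters appear through the Weyl character formula with $\rho_{C}=(n,\ldots,1)=\rho_n$. Concretely, we expand $f=p_1^Np_{2k-1}$ in the Schur basis of $\Lambda_n$,
\[
f=\sum_\lambda \chi^\lambda(\nu_k)\,s_\lambda,
\]
which is the Frobenius formula with $\nu_k=(2k-1,1^N)$. Then we compute $\int_{G/P_\mu}s_\lambda$. Because $s_\lambda\prod_{i<j}(x_i-x_j)=a_{\lambda+\delta}$, antisymmetrizing over the sign changes picks out those $\lambda$ for which $\lambda+\delta_{n-1}$ combines with the $C_n$ Weyl denominator. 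The result should be that $\int s_\lambda$ is a sign times an odd-symplectic or $Q$-type character, evaluated in degree $2k$. Its projection onto $\VV$ (that is, modulo decomposables $I^2$) only sees the "hook" components. This is where the alternating sum over $\pi_j(k)=(k-j,1^j)$ comes from: $p_k$ equals $\sum_j(-1)^j s_{(k-j,1^j)}$ modulo products. The doubling $2\pi_j(k)$ reflects the substitution $x_i^2$.

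\textbf{Step 3: extraction.} Concretely, we show that $\int_{G/P_\mu}s_\lambda$ is nonzero modulo $I^2$ only when $\lambda=2\pi+\rho_n$ for a partition $\pi$ of $k$ with $\ell(\pi)\le n$, in which case it equals $\pm s_\pi(x_1^2,\ldots,x_n^2)$. We expect this to follow from the identity
\[
\mathrm{Alt}_{W(C_n)}\bigl(x^{\lambda+\delta}\bigr)=\det\bigl(x_i^{a_j}-x_i^{-a_j}\bigr)-\text{type},
\]
using the fact that the $C_n$ Weyl denominator in $x$ corresponds to the $\rho_n$-shift. Reducing modulo $I^2$ and applying $s_\pi\equiv$ ($(-1)^j/k$ times $p_k$ if $\pi=(k-j,1^j)$, and $0$ otherwise) gives
\[
\ol\nabla(p_k^{(2)})\equiv k\,2^{-N}\cdot\frac1k\sum_j(-1)^j\chi^{2\pi_j(k)+\rho_n}(\nu_k)\,p_k^{(2)}.
\]
The overall sign $(-1)^{N-1}$ comes from the orientation and the Weyl-denominator conventions.

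\textbf{Main obstacle.} We expect Step 3 to be the main obstacle: establishing precisely that the localization pushforward of $s_\lambda$ from the Lagrangian Grassmannian vanishes modulo decomposables unless $\lambda-\rho_n$ is doubled, with the correct sign and the $2^{-N}$ normalization, including the boundary cases $\ell(\lambda)$ versus $n$. We would first check this identity against small $n$ (e.g.\ $n=2$) by direct computation, then prove it by a column-operation argument on the alternant.
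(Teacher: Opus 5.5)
Your outline matches the paper's route: differentiate to get $k\,2^{-N}\int p_{\nu_k}$, Frobenius-expand into Schur polynomials, push forward by localization, then reduce modulo $I^2$ via $s_\pi\equiv \chi^\pi((k))\,p_k/k$ and the hook values. Your formula $\int f=\mathrm{Alt}_W\bigl(f\,\Delta(X)\bigr)/\Delta_{C_n}$ is also correct up to constants. The gap is that the step carrying all the content is only stated as an expectation. That step is the evaluation $\int_{G/P_\mu}s_\lambda(X)=(-1)^{D_\mu}s_\pi(X^{(2)})$ when $\lambda=2\pi+\rho_n$, and $0$ otherwise (the paper's Lemma~\ref{lem:C-integration}). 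The tools you point to for it are also the wrong ones: Laurent alternants $\det(x_i^{a_j}-x_i^{-a_j})$ and symplectic or $Q$-type characters belong to the multiplicative setting of $\Q[\XX^*(T)]$. In $R=\rH^*(\BB T)$, by contrast, $W(C_n)$ acts linearly by $x_i\mapsto -x_i$. You also leave the sign $(-1)^{N-1}$ and the fate of the factor $2^n$ from the roots $2x_i$ to ``conventions,'' yet both change the final answer.

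The missing idea is a short parity computation. Put $\gamma=\lambda+\delta_n$ and $\mathrm{Alt}_W=\sum_{w\in W}\mathrm{sgn}(w)\,w$.
\begin{itemize}
\item Each sign change $x_i\mapsto -x_i$ is a reflection (in $2x_i$) and multiplies $x^\gamma$ by $(-1)^{\gamma_i}$. Summing over $K=\{\pm1\}^n$ therefore produces $\prod_i\bigl(1+(-1)^{\gamma_i+1}\bigr)$. Hence $\mathrm{Alt}_W(x^\gamma)$ is $2^n a_\gamma(X)$ when every $\gamma_i$ is odd, and $0$ otherwise.
\item This $2^n$ cancels against $\Delta_{C_n}=2^n e_n(X)\,\Delta(X^{(2)})$, using $\Delta(X^{(2)})=\Delta(X)\prod_{i<j}(x_i+x_j)$.
\item Writing $\gamma=2\kappa+(1^n)$ gives $a_\gamma(X)/e_n(X)=a_\kappa(X^{(2)})$. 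The bialternant formula then yields $s_{\kappa-\delta_n}(X^{(2)})$, with $\lambda=2(\kappa-\delta_n)+\rho_n$.
\end{itemize}
So $\rho_n$ arises as $(2\delta_n+(1^n))-\delta_n$, the exponent of the additive Weyl denominator minus the $\GL_n$ shift. The vanishing is exact, not merely modulo $I^2$, and no boundary issues occur, since $\ell(\pi_j(k))\le k\le n$.

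Finally, the sign is forced rather than conventional. In \cite[Lemma 5.4.4]{FYZ4}, $\frR_\mu=\prod_{\langle\alpha,\mu\rangle<0}\alpha$ is the product of the $D_\mu=N-1$ negated roots $-(x_i+x_j)$, $i\le j$. This gives exactly $(-1)^{N-1}$, and together with the cancellation of $2^n$ it leaves only the $2^{-N}$ coming from $\eta$.
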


\subsubsection{Type D} For $G = \PSO_{2n}$, there are three minuscule coweights. A basis for $\VV$ consists of the power sum polynomials $p_1^{(2)}, \ldots, p_{n-1}^{(2)}$ in the $\{x_1^2, \ldots, x_n^2\}$ as well as the Pfaffian $\Pf := x_1  x_2 \cdots  x_n$, and we denote $\epsilon_k(\Omega, \mu)$ (resp. $\epsilon_{\Pf}$) the eigenweight associated to $p_k^{(2)}$ (resp. $\Pf$). The \emph{standard} coweight (corresponding to the standard representation of $\SO_{2n}$) is represented by $\mu = (1, 0^{n-1})$. For $\Omega = \frac{1}{2}\sum_{i=1}^n x_i^2$, \cite{FYZ4} calculates its eigenweights to be 
\begin{equation}\label{eq:D_n-standard-eigenweights}
\epsilon_k(\Omega, \mu) = 4 \text{ for } k= 1, 2, \ldots, n-1 \text{ and } \epsilon_{\Pf} = 2.
\end{equation}
There are also two \emph{spin} coweights, corresponding to the two spin representations of $\Spin_{2n}$, which \cite{FYZ4} did not calculate in closed form. This is addressed by our next Theorem. 

\begin{thm}\label{thm:D-main}
Fix $n \geq 2$ and let $G = \PSO_{2n}$ and $\delta_n$  be the partition $(n-1, \ldots, 1,0)$. Let $\mu$ be either of the spinor minuscule coweights of $G$, and $N := \binom{n}{2}+1$ be the arithmetic dimension of $G/P_\mu$. Take $\Omega = \frac{1}{2}\sum x_i^2$.

(1) If $n$ is odd, then the eigenweights are 
\begin{equation}\label{eq:D-eps-k}
\epsilon_k(\Omega, \mu) = (-1)^{N-1} 2^{n-1-N} \sum_{j=0}^{k-1} (-1)^j \chi^{2 \pi_j(k) + \delta_n}(\nu_k) \quad \text{for $k = 1, 2, \ldots,n-1$}
\end{equation}
and\footnote{The factor $\frac{ \binom{n}{2}!}{\prod_{i=1}^{n-1}(2i-1)^{n-i}}$ in \eqref{eq:eps-pfaff} arises as the dimension of the irreducible representation of the symmetric group associated to the partition $\delta_n$.}
\begin{equation}\label{eq:eps-pfaff}
\epsilon_{\Pf}(\Omega, \mu) = (-1)^{N-1} 2^{n-2-N} \left( \binom{n}{2}+1 \right) \frac{ \binom{n}{2}!}{\prod_{i=1}^{n-1}(2i-1)^{n-i}}.
\end{equation}
where $\pi_j(k) = (k-j, 1^j)$ and $\nu_k = (2k-1, 1^{N})$ as in Theorem \ref{thm:C-main}. 

(2) If $n = 2m$ is even, then for $k \neq m$ the eigenweight attached to $p_k^{(2)}$ is the same expression \eqref{eq:D-eps-k}. The elements $p_m^{(2)}$ and $\Pf$ are not eigenvectors for $\ol {\nabla}^\eta_\mu$, but we calculate the action of $\ol {\nabla}^\eta_\mu$ on their 2-dimensional span, in \eqref{eq:D-even-matrix}. 
\end{thm}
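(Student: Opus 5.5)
The plan is to compute $\ol\nabla^\eta_\mu(p_k^{(2)})$ explicitly with a localization (Atiyah--Bott) formula for $\int_{G/P_\mu}$, and then to recognize the resulting symmetric-function expression as a character value via the Murnaghan--Nakayama rule.

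\textbf{Setup.} For $G=\PSO_{2n}$, take the spin coweight $\mu=(\tfrac12,\ldots,\tfrac12)$. The relevant part of $\mu$ is $\mu=\tfrac12(1,\ldots,1)$ up to the central shift, which is harmless after normalizing $\Omega$. Then:
\begin{itemize}
\item $L_\mu=\GL_n$, with $W_\mu=S_n$.
\item $G/P_\mu$ is the orthogonal Grassmannian $\SO_{2n}/\GL_n$, of dimension $\binom n2$.
\item $t_\mu=\partial_\mu\Omega=\tfrac12(x_1+\cdots+x_n)=\tfrac12 p_1$.
\end{itemize}

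\textbf{Localization.} The pushforward $R^{S_n}\to R^W$ is
\[
\int_{G/P_\mu}F=\sum_{w\in W/W_\mu} w\!\left(\frac{F}{\prod_{i<j}(x_i+x_j)}\right),
\]
because the tangent weights at the base point are the roots $x_i+x_j$ not in $L_\mu$. Coset representatives are sign changes on an even number of coordinates.

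\textbf{Derivatives.} We have $\partial_\mu p_k^{(2)}=\sum_i k x_i^{2k-1}=k\,p_{2k-1}$ and $\partial_\mu \Pf=\tfrac12 e_{n-1}$.

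So we must compute
\[
\sum_{\epsilon}\epsilon\!\left(\frac{p_1^{N}\,p_{2k-1}}{\prod_{i<j}(x_i+x_j)}\right)
\]
modulo $I^2$, in degree $2k$. Next, write $\prod_{i<j}(x_i+x_j)=s_{\delta_n}(x)$, a classical identity. The sum over even sign changes of $f/s_{\delta_n}$ can be rewritten as an antisymmetrization over the hyperoctahedral-type group $W(D_n)$. After multiplying numerator and denominator by the Vandermonde in $x_i^2$, the Weyl denominator of $D_n$ appears, namely $\prod_{i<j}(x_i^2-x_j^2)=a_{\delta_n}(x)\,s_{\delta_n}(x)$ up to sign.

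The pushforward of $F$ then becomes a ratio of a $D_n$-alternant of $F\cdot a_{\delta_n}$ over the Weyl denominator. By the bialternant formula, a monomial $x^{\lambda+\delta_n}$ contributes only through its parity-twisted content. Even-exponent alternants $\det(x_i^{2\alpha_j})$ give symmetric functions in $x_i^2$, and odd ones give $\Pf\cdot(\ldots)$.

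\textbf{Expansion.} Expand $p_1^N p_{2k-1}\, s_{\delta_n}$ in Schur functions by Murnaghan--Nakayama. The coefficient of $s_\lambda$ in $p_{\nu}\,s_{\delta_n}$, with $\nu=(2k-1,1^N)$, equals the skew character $\chi^{\lambda/\delta_n}(\nu)$. Applying the $D_n$-alternation kills every $\lambda$ except those whose shifted parts $\lambda+\delta_n$ fall into a pattern making the output a power sum in $x^2$ modulo $I^2$.

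Modulo $I^2$, only $\lambda$ of the form $2\pi+\delta_n$ with $\pi$ a hook survive, because of the standard fact that $s_\pi\equiv(-1)^j p_k/k$ mod decomposables for $\pi=(k-j,1^j)$. The Schur function $s_{2\pi+\delta_n}/s_{\delta_n}$ specializes to $s_\pi(x^2)$ up to a sign and a power of $2$. This produces the sum $\sum_j(-1)^j\chi^{2\pi_j(k)+\delta_n}(\nu_k)$. The factor $k$ cancels against the $1/k$, and the factor $2^{n-1-N}$ comes from $t_\mu^N=2^{-N}p_1^N$ together with the $2^{n-1}$ cosets.

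For $\Pf$, the analogous computation with $e_{n-1}$ replacing $p_{2k-1}$ yields the single term $\lambda=\delta_n+(1^n)$ up to reordering, with coefficient $\binom n2+1$ times $\dim\chi^{\delta_n}$.

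\textbf{Parity.} When $n$ is odd, the degree $n$ of $\Pf$ is odd, so it never coincides with the even degree $2k$ of any $p_k^{(2)}$; hence no mixing occurs and the $p_k^{(2)}$ and $\Pf$ are eigenvectors. When $n=2m$, the classes $p_m^{(2)}$ and $\Pf$ lie in the same degree. The computation then gives a $2\times2$ matrix; its off-diagonal entries come from alternants of the other parity.

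\textbf{Main obstacle.} I expect the hardest step to be the sign and power-of-$2$ bookkeeping in converting the even-sign-change antisymmetrization into Schur functions in $x^2$. This requires the identity relating $a_{2\lambda+2\delta'}$-type determinants to the Weyl denominator of $D_n$. It also requires verifying exactly which $\lambda$ survive modulo $I^2$. I would first check the cases $n=2,3$ by hand to pin down the signs.
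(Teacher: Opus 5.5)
Your overall route is the paper's: localization over the even sign changes $K\cong W/W_\mu$, clearing denominators via $\Delta(X^{(2)})=\Delta(X)\prod_{i<j}(x_i+x_j)$, sorting $\lambda$ by the parity pattern of $\lambda+\delta_n$, the identity $s_{2\pi+\delta_n}(X)=s_{\delta_n}(X)\,s_\pi(X^{(2)})$, and the hook lemma modulo $I^2$. (That identity is exact, with no sign or power of $2$.)

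\textbf{The expansion step is wrong as written.} Since $\int_{G/P_\mu}F=\sum_{\sigma\in K}\sigma(F/\frR_\mu)$ with $\frR_\mu=\pm s_{\delta_n}(X)$, what you must expand in Schur functions is $p_{\nu_k}$ itself. By the Frobenius formula, $p_{\nu_k}=\sum_\lambda\chi^\lambda(\nu_k)s_\lambda$, with ordinary character values. Each $\int_{G/P_\mu}s_\lambda$ is then evaluated by the parity analysis; for $\lambda=2\pi+\delta_n$ it gives $2^{n-1}s_\pi(X^{(2)})$ up to sign.

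If you instead expand $p_{\nu_k}s_{\delta_n}=\sum_\lambda\chi^{\lambda/\delta_n}(\nu_k)s_\lambda$, you are left with terms $s_\lambda/s_{\delta_n}^2$. These do not reduce to $s_\pi(X^{(2)})$, and the degrees do not match: $|\lambda|=2\binom n2+2k$, so $\lambda=2\pi+\delta_n$ would force $|\pi|=k+\tfrac12\binom n2$. The formula $\sum_j(-1)^j\chi^{2\pi_j(k)+\delta_n}(\nu_k)$ you state follows from the corrected expansion, not from yours.

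\textbf{The $\Pf$ coefficient needs justification.} Skew shapes enter only here. By Pieri, $p_1^N e_{n-1}=\sum_\lambda f^{\lambda/(1^{n-1})}s_\lambda$. For $n$ odd only $\lambda=\delta_n+(1^n)$ survives modulo $I^2$, since Case E would need $|\pi|=n/2$. The coefficient $(\binom n2+1)f^{\delta_n}$ that you assert requires the observation that $(\delta_n+(1^n))/(1^{n-1})$ is the disjoint union of the cell $(n,1)$ and a translate of the staircase $\delta_n$.

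\textbf{The sign is a convention, not bookkeeping.} The factor $(-1)^{N-1}$ cannot be pinned down by checking $n=2,3$. It comes from the Euler class $\frR_\mu=\prod_{\langle\alpha,\mu\rangle<0}\alpha=(-1)^{\binom n2}\prod_{i<j}(x_i+x_j)$ in the localization formula. Your denominator $\prod_{i<j}(x_i+x_j)$ drops it.

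\textbf{Part (2) is essentially not carried out.}
\begin{enumerate}
\item For $k\neq m$, you must observe that the Case O terms in $\ol{\nabla}^\eta_\mu(p_k^{(2)})$ are multiples of $\Pf\, s_\pi(X^{(2)})$ with $|\pi|=k-m$. They are absent for $k<m$ and lie in $I^2$ for $k>m$.
\item For $k=m$, the term $\lambda=\delta_n+(1^n)$ gives $C_m=(-1)^{D_\mu}m\,2^{n-1-N}\chi^{\delta_n+(1^n)}(\nu_m)$.
\item The Case E terms $\lambda=2\pi_j(m)+\delta_n$ of the Pieri expansion for $\Pf$ give $B_m$, with weights $(-1)^j f^{\lambda/(1^{n-1})}/m$.
\end{enumerate}
Most importantly, the assertion that $p_m^{(2)}$ and $\Pf$ are not eigenvectors requires exhibiting a nonzero off-diagonal entry in some case. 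The paper does this for $n=4$: Murnaghan--Nakayama gives $\chi^{(4,3,2,1)}((3,1^7))=-48$, hence $C_2=-6$ (and $B_2=-1/8$). Saying the off-diagonal entries ``come from alternants of the other parity'' does not show they are nonzero.
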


\begin{remark}
Theorem \ref{thm:D-main}(2) addresses a question raised in \cite{FYZ4}, namely whether the $\ol {\nabla}^\eta_\mu$ commute as $\mu$ ranges over different minuscule coweights, when $G$ is semisimple. This property is of interest because it is an assumption needed for the main result of \cite{FYZ4} (in all types other than $D_{2m}$, it is implied by formal considerations). Our calculations show that the commutativity does \emph{not} necessarily hold in type $D_{2m}$. For the standard minuscule coweight $\mu = \omega_1^\vee$ of $\PSO_{4m}$, \cite{FYZ4} showed that $p_m^{(2)}$ and $\Pf$ are eigenvectors for $\ol {\nabla}^\eta_{\omega_1^\vee}$, with distinct eigenvalues. However, we prove that the matrix \eqref{eq:D-even-matrix} is not diagonal in general (\S \ref{ssec:D-n=4}). 
\end{remark}

\subsection{Acknowledgments}

The author was supported by the NSF (grants DMS-2302520 and DMS-2441922), the Simons Foundation, and the Alfred P. Sloan Foundation. The author is indebted to Zhiwei Yun and Wei Zhang for a stimulating collaboration which led to the problem studied here, and Trieu Trinh for technical help. This work was done in collaboration with the Superhuman Reasoning team at Google DeepMind led by Thang Luong. 

\section{Declaration of AI Usage}\label{sec:origin-story}
The core mathematical content of this paper was entirely generated by an internal reasoning agent built upon Gemini Deep Think, codenamed {\Aletheia} at Google DeepMind \cite{aletheia}. The paper itself was (re)written by the human author, starting from \Aletheia's output. The ``Human--AI Interaction Card'' (see \cite[\S 5]{aletheia} for explanation of the concept) below this paragraph summarizes the interaction, and links to the raw model outputs. By comparing the paper to the original outputs, the reader can verify that \Aletheia's original output is already complete and correct in mathematical content. Thus the author's only contribution to this paper (aside from developing \Aletheia) was to fiddle with the exposition, and to write an Introduction. 

\begin{interactionlog}[]{https://github.com/google-deepmind/superhuman/tree/main/aletheia/F26}

    \human{Query eigenweights for Type A.}

    \ai{\Aletheia}{Response (fully correct).}

    \human{Proceed to query eigenweights for Type C.}

    \ai{\Aletheia}{Response (fully correct).}

    \human{Proceed to query eigenweights for Type D.}

    \ai{\Aletheia}{Response (fully correct).}

\end{interactionlog}

\section{Eigenweights for type A}

\subsection{Setup} 

Fix $n \geq 2$ and let $G = \GL_n$. Then we have $R \cong  \Q[\XX^*(T)] \cong \Q[x_1, \ldots, x_n]$, with $\deg x_i = 2$.  The Weyl group $W$ can be identified with the symmetric group $S_n$, and its natural action on $R$ is via permutation of the $x_i$. Let $X := \{x_1, \ldots, x_n\}$. The invariant ring $R^W = \rH^*(\BB G)$ is the ring of symmetric polynomials in $X$. Let $p_i(X) = \sum_{x_k \in X} x_k^i$ be the $i$th power sum in $X$. Then we have 
\[
R^W = \Q[p_1(X), \ldots, p_n(X)]
\]
and the $\{p_i(X)\}_{i=1}^n$ form a basis for $\VV = I/I^2$, where we recall that $I \subset R^W$ is the augmentation ideal.

\subsubsection{Minuscule coweight} Fix $1 \leq m <n$ and the minuscule coweight $\mu = (1^m, 0^{n-m})$ where $n>m$. The invariant ring  $\rH^*(\BB P_\mu) \cong \rH^*(\BB L_\mu) \cong R^{W_\mu}$ consists of polynomials symmetric separately in the first $m$ variables $A:= \{x_1, \ldots, x_m\}$ and the remaining $n-m$ variables $B := \{x_{m+1}, \ldots, x_n\}$.

The associated Levi subgroup is $L_\mu \cong \GL_m \times \GL_{n-m}$, whose Weyl group is $W_\mu = S_m \times S_{n-m}$. The flag variety $G/P_\mu$ identifies with $\Gr(m, n)$, and has dimension $D_\mu := m(n-m)$. We let $N := D_\mu+1$.

\subsubsection{Differential operator} The differential operator $\partial_\mu$ expressed in these terms is 
    $$ \partial_\mu = \sum_{i=1}^m \frac{\partial}{\partial x_i}. $$
    This operator carries $R^W$ to $R^{W_\mu}$.

\subsubsection{Casimir} We work with the normalization $\Omega = \frac{1}{2}\sum_{i=1}^n x_i^2$ of the Casimir element. Then we have 
\[
t_\mu = \partial_\mu (\Omega) = x_1+\cdots+x_m \in R^{W_\mu}.
\]
For a set $S$, let $p_i(S) = \sum_{s \in S} s^i$ be the $i$th power sum in $S$. Thus $t_\mu = p_1(A)$ and $\eta = p_1(A)^{N}$. 

\subsection{Proof of Theorem \ref{thm:A-main}}
We consider the operator $\ol {\nabla}^\eta_\mu$ on $\VV := I/I^2$ defined by the formula 
\[
\ol {\nabla}^\eta_\mu (f) = \int_{G/P_\mu} (p_1(A)^{N} \partial_\mu f) .
\]

Note that $\VV = I/I^2$ is 1-dimensional in each degree where it is non-zero. The power sums $p_1(X), \ldots, p_n(X)$ are a homogeneous basis for this grading, so we know a priori that they will be eigenvectors. We wish to find the associated eigenweights $\epsilon_k(\Omega, \mu)$ such that 
\[
\ol {\nabla}^\eta_\mu (p_k(X)) \equiv \epsilon_k(\Omega, \mu) p_k(X) \pmod{I^2}.
\]

For $k = 1, \ldots, n$, we have\footnote{even for $k=1$, since $p_0(A) = m$}
\begin{equation}
 \partial_\mu p_k(X) = \sum_{i=1}^m \frac{\partial}{\partial x_i} \sum_{j=1}^n x_j^k = k \sum_{i=1}^m x_i^{k-1} = k p_{k-1}(A). 
 \end{equation}

Next we consider 
\begin{equation}\label{eq:integrand}
\ov \nabla^{\eta}_\mu (p_k(X)) = k \int_{G/P_\mu} p_1(A)^{N} p_{k-1}(A).
\end{equation}
For a partition $\gamma = (\gamma_1, \ldots, \gamma_r)$, we define 
\begin{equation}\label{eq:partition-polynomial}
p_{\gamma} (A) := p_{\gamma_1}(A) p_{\gamma_2}(A) \ldots p_{\gamma_r}(A)
\end{equation}
In these terms, we can rewrite $p_1(A)^{N} p_{k-1}(A) = \Lambda_k p_{\nu_k}(A)$, where $\nu_k$ is the partition\footnote{for $k=1$, we interpret this as $\nu_1=(1^{N})$.} $(k-1, 1^{N})$ and 
\[
\Lambda_k := \begin{cases} m  & k=1, \\ 1 & k > 1. \end{cases}
\]

\subsubsection{Recollections on Schur polynomials}
We recall some definitions related to partitions and Schur polynomials. A reference is \cite[\S 7.10]{Stan2}. Let $\pi = (\pi_1 \geq  \pi_2 \geq  \ldots \geq \pi_l )$ be a partition. We write 
\[
\len(\pi) := l, \quad |\pi| := \pi_1 + \pi_2 + \ldots + \pi_l.
\]
The \emph{Young diagram} of $\pi$ has $\pi_1$ boxes in the first row, $\pi_2$ boxes in the second row, etc. A \emph{Semistandard Young Tableau} (SSYT) $T$ is a filling of the Young diagram with integers from $\{1, 2, \ldots, k\}$ such that entries increase weakly along rows and strictly down each column. A \emph{Standard Young Tableau} (SYT) is a filling of the Young diagram with the integers $\{1, \ldots, |\pi|\}$ such that each integer is used exactly once, and moreover entries increase \emph{strictly} along rows and columns. 

For a partition $\pi$, we denote by $\SSYT(\pi, k)$ the set of SSYT for $\pi$ with entries in $\{1, \ldots, k\}$. To $T \in \SSYT(\pi, k)$, we define  
\[
x^T := \prod_{i=1}^k x_i^{n_i}, \quad \text{$n_i = $ number of times that $i$ appears in $T$}.
\]
The \emph{Schur polynomial} in $\{x_1, \ldots, x_k\}$ associated to a partition $\pi$ is 
\[
s_\pi(\{x_1, \ldots, x_k\}) = \sum_{T \in \SSYT(\pi, k)} x^T.
\]

\subsubsection{Integration}
The pushforward map $\BB P_\mu \rightarrow \BB G$ induces on cohomology a map 
\[
\int \co  R^{W_\mu} \to R^W.
\]

\begin{lemma}\label{lem:A-integral}
Let $\Pi:=((n-m)^m)$ be the $m \times (n-m)$ rectangular partition. Then we have 
$$ 
\int_{G/P_\mu} s_{\lambda}(A) = (-1)^{D_\mu} \begin{cases} s_{\lambda-\Pi}(X) & \lambda \supseteq \Pi, \\ 0 & \text{otherwise}. 
\end{cases}
$$
\end{lemma}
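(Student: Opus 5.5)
We will compute the pushforward through the Weyl-group symmetrization formula (Atiyah--Bott / Brion localization). For $f \in R^{W_\mu}$ we have
\[
\int_{G/P_\mu} f = \sum_{w \in W/W_\mu} w\!\left( \frac{f}{e_\mu} \right), \qquad e_\mu = \prod_{i \le m < j} (x_i - x_j)^{\pm 1 \text{ convention}},
\]
where $e_\mu$ is the equivariant Euler class of the tangent space at the base point of $\Gr(m,n)$. Its weights are the roots of $\mathfrak g/\mathfrak p_\mu$; with the paper's conventions these are $\pm(x_i-x_j)$ for $i\le m<j$. The sign choice accounts for the factor $(-1)^{D_\mu}$, since the $D_\mu = m(n-m)$ factors flip sign together. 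Concretely, we aim to prove
\[
\int_{G/P_\mu} f = (-1)^{D_\mu}\sum_{w\in S_n/(S_m\times S_{n-m})} w\!\left(\frac{f(A)}{\prod_{a\in A,\, b\in B}(a-b)}\right).
\]
The normalization can be checked in the simplest case $n=2$, $m=1$: there $\int x_1 = \pm 1$, and the sign should come out as $(-1)^{D_\mu}$.

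Next we write $s_\lambda(A) = a_{\lambda+\delta_m}(A)/a_{\delta_m}(A)$, with $a$ the Vandermonde-type alternants. We use two facts:
\[
a_{\delta_m}(A)\,a_{\delta_{n-m}}(B)\prod_{a\in A,\,b\in B}(a-b) = a_{\delta_n}(X),
\]
and multiplying numerator and denominator by $a_{\delta_{n-m}}(B)$ turns the sum over cosets into a full antisymmetrization. The sign character of $S_n$ restricts correctly to $S_m\times S_{n-m}$ on the two factors. This gives
\[
\int s_\lambda(A) = (-1)^{D_\mu}\frac{1}{a_{\delta_n}(X)}\sum_{w\in S_n}\operatorname{sgn}(w)\, w\!\left(x_A^{\lambda+\delta_m}\, x_B^{\delta_{n-m}}\right)\Big/ (\text{stabilizer factor}),
\]
which is the alternant $a_\alpha(X)$ with exponent vector
\[
\alpha = (\lambda_1+m-1,\dots,\lambda_m,\; n-m-1,\dots,0).
\]

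The key step, and the main obstacle, is identifying $a_\alpha(X)/a_{\delta_n}(X)$ carefully. Here $\lambda$ has at most $m$ parts.
\begin{itemize}
\item If $\lambda_m \ge n-m$, then $\alpha$ is strictly decreasing. Subtracting $\delta_n$ gives $\alpha-\delta_n = (\lambda_1-(n-m),\dots,\lambda_m-(n-m),0,\dots,0) = \lambda-\Pi$, so the quotient is $s_{\lambda-\Pi}(X)$.
\item If $\lambda_m < n-m$, then $\lambda_m \le n-m-1$. In that case the entry $\lambda_m$ coincides with one of $0,\dots,n-m-1$, so $\alpha$ has a repeated entry and the alternant vanishes.
\end{itemize}
The condition $\lambda_m\ge n-m$ is exactly $\lambda\supseteq\Pi$, so this case split matches the statement.

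The only genuine care needed is the overall sign and the Euler class convention, namely which weights appear in the tangent space of $G/P_\mu$ and with which orientation. We would pin these down by matching against the case $m=1$, $n=2$ (where $\Gr(1,2)=\mathbb P^1$), ensuring the constant is exactly $(-1)^{D_\mu}$ rather than some other sign.
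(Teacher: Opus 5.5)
Your proposal is correct and is essentially the paper's proof: localization over $W/W_\mu$, the factorization $\Delta(X)=\Delta(A)\Delta(B)\prod_{i\le m<j}(x_i-x_j)$, rewriting the coset sum as the full alternant $a_\gamma(X)$ (exactly, with no stabilizer factor to divide by), and the same case split on whether $\lambda_m\ge n-m$. The sign needs no test case, since the paper's Euler class is $\mathfrak{R}_\mu=\prod_{\langle\alpha,\mu\rangle<0}\alpha=\prod_{i\le m<j}(x_j-x_i)=(-1)^{D_\mu}\prod_{i\le m<j}(x_i-x_j)$, which produces the factor $(-1)^{D_\mu}$ directly.
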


This Lemma resembles results in \cite[\S 14]{Ful98}, but we did not find an exact reference in the literature. 

\begin{proof} 
The Vandermonde determinant for a set of variables $Y = \{y_1, \ldots, y_r\}$ is denoted by
\[
\Delta(Y) = \prod_{1 \leq i < j \leq r} (y_i - y_j).
\]

Let $\frR_{\mu} \in \rH_G^*(G/P_\mu) \cong R^{W_{\mu}}$ be the $G$-equivariant top Chern class of the tangent bundle of $G/P_{\mu}$. Explicitly, we have
\begin{equation}\label{eq:equivariant-top-Chern}
\frR_{\mu} :=\prod_{\j{\a,\mu} < 0}\a\in R^{W_{\mu}}
\end{equation}
where the product runs over roots of $G$. In this case, we have explicitly 
\[
\frR_{\mu} = (-1)^{D_\mu} \prod_{i=1}^m \prod_{j=m+1}^n (x_i-x_j)
\]

Recall that we defined $X:=\{x_1, \ldots, x_n\}$ and $A :=\{x_1, \ldots, x_m\}$. Write $B:=\{x_{m+1}, \ldots, x_n\}$ so that $X = A \sqcup B$. Then we have the relation 
\begin{equation}\label{eq:vandermonde-eq1}
\Delta(X) = \Delta(A) \cdot \Delta(B) \cdot \prod_{i=1}^m \prod_{j=m+1}^n (x_i - x_j) =   (-1)^{D_\mu}  \Delta(A) \Delta(B) \mathfrak{R}_\mu.
\end{equation}

For a set of variables $Y = (y_1, \ldots, y_r)$ and a partition $\gamma = (\gamma_1, \ldots, \gamma_r)$, write $Y^\gamma := y_1^{\gamma_1} \ldots y_r^{\gamma_r}$. Next we invoke the Weyl Character Formula for Schur polynomials, also known as \emph{Cauchy's bialternant formula} \cite[Theorem 7.15.1]{Stan2}, which says that for $\delta_r = (r-1, r-2, \ldots, 0)$, 
\begin{equation}\label{eq:bialternant}
s_\nu(Y) = \frac{a_{\nu + \delta_r}(Y)}{\Delta(Y)}, \quad \text{where }a_\gamma(Y) = \sum_{w \in S_r} \operatorname{sgn}(w) w(Y^\gamma). 
\end{equation}
Note that we may view $\Delta(B) = a_{\delta_{n-m}}(B)$. 

By \cite[Lemma 5.4.4]{FYZ4} (which is essentially Atiyah--Bott localization for partial flag varieties) for any $f\in R^{W_{\mu}}$, we have
\begin{equation}\label{Av w f/R}
\int_{G/P_{\mu}}f=\sum_{w\in W/W_{\mu}}w(f/\frR_{\mu}).
\end{equation}
Here the sum is over a set of representatives of $W/W_{\mu}$, which is well-defined since $f/\frR_{\mu}$ is $W_{\mu}$-invariant.\footnote{The element $f/\frR_{\mu}$ is understood in the fraction field of $R$, but the identity implicitly mandates that the sum above will lie in $R^{W}$.} Substituting $f = s_\lambda(A) = a_{\lambda+\delta_m}(A)/\Delta(A)$ into \eqref{Av w f/R}, we obtain
\begin{align}\label{eq:A-schur-1}
\int_{G/P_\mu} s_\lambda(A) & =   \sum_{w \in W/W_\mu} w \left( \frac{s_\lambda(A)}{\mathfrak{R}_\mu} \right) =   \sum_{w \in W/W_\mu} w \left( \frac{a_{\lambda+\delta_m}(A)}{\Delta(A) \mathfrak{R}_\mu} \right) \nonumber \\
& \stackrel{\eqref{eq:vandermonde-eq1}}= (-1)^{D_\mu} \sum_{w \in W/W_\mu} w \left( \frac{a_{\lambda+\delta_m}(A) \Delta(B)}{\Delta(X)} \right).
\end{align}
Since $w(\Delta(X)) = \operatorname{sgn}(w) \Delta(X)$, we can pull the denominator out of the summation with a sign change, thus rewriting \eqref{eq:A-schur-1} as 
\begin{equation}\label{eq:schur-polynomial}
\int_{G/P_\mu} s_\lambda(A) = \frac{(-1)^{D_\mu}}{\Delta(X)} \sum_{w \in W/W_\mu} \operatorname{sgn}(w) w \left( a_{\lambda+\delta_m}(A) \Delta(B) \right).
\end{equation}

Note that $a_{\lambda+\delta_m}(A) \Delta(B)  = a_{\lambda+\delta_m}(A) a_{\delta_{n-m}}(B)  $ is already antisymmetrized with respect to the subgroup $W_\mu = S_m \times S_{n-m}$, meaning that
\[
a_{\lambda+\delta_m}(A) a_{\delta_{n-m}}(B) = \left( \sum_{\sigma \in S_m} \operatorname{sgn}(\sigma) \sigma(A^{\lambda+\delta_m}) \right) \left( \sum_{\tau \in S_{n-m}} \operatorname{sgn}(\tau) \tau(B^{\delta_{n-m}}) \right).
\]
Therefore, further symmetrizing over $W/W_\mu$ can be rewritten as an antisymmetrization over all of $W$: 
\begin{equation}\label{eq:numerator-weyl-sum}
\sum_{w \in W/W_\mu} \operatorname{sgn}(w) w \left( a_{\lambda+\delta_m}(A) a_{\delta_{n-m}}(B) \right) = \sum_{\rho \in S_n} \operatorname{sgn}(\rho) \rho \left( A^{\lambda+\delta_m} B^{\delta_{n-m}} \right).
\end{equation}
By definition, we have 
\[
A^{\lambda+\delta_m} B^{\delta_{n-m}} = 
x_1^{\lambda_1+m-1} x_2^{\lambda_2+m-2} \cdots x_m^{\lambda_m} \cdot x_{m+1}^{n-m-1} x_{m+2}^{n-m-2} \cdots x_n^0.
\]
Let $\gamma$ be the partition $(\lambda_1+m-1, \ldots, \lambda_m, n-m-1, \ldots, 0)$. Then \eqref{eq:numerator-weyl-sum} can be written as $a_\gamma(X)$, and inserting this into \eqref{eq:schur-polynomial} gives 
\[
\int_{G/P_\mu} s_\lambda(A) =(-1)^{D_\mu}  \frac{a_\gamma(X)}{\Delta(X)}.
\]

It remains to show that 
\[
\frac{a_\gamma(X)}{\Delta(X)} = \begin{cases} s_{\lambda-\Pi}(X) & \lambda \supseteq \Pi, \\ 0 & \text{otherwise.}\end{cases}
\]
By its definition as an antisymmetrization, it is clear that $a_\gamma(X)$ vanishes if the components of $\gamma$ are not distinct. The first $m$ components of $\gamma = (\lambda_1+m-1, \ldots, \lambda_m, n-m-1, \ldots, 0)$ are strictly decreasing (since $\lambda$ is a partition), and the last $n-m$ components are clearly strictly decreasing. Hence $\gamma$ is strictly decreasing if and only if the $m$-th component is strictly greater than the $(m+1)$-st component, i.e., $\lambda_m \geq n-m$. If this last inequality is not satisfied, then $0 \leq \lambda_m < (n-m)$ so $\lambda_m \in \{0, \ldots, n-m-1\}$. Since this value also appears among the last $n-m$ components of $\gamma$, we conclude that $\gamma$ has repeated components, so the alternant $a_\gamma(X)=0$.

Given that $\len(\lambda)\le m$, this condition is equivalent to $\lambda$ containing the rectangular partition $\Pi=((n-m)^m)$. This shows that $\int_{G/P_\mu} s_{\lambda}(A) = 0$ if $\lambda \not\supseteq \Pi$. 

Finally, suppose $\lambda \supseteq \Pi$. Then we have
\[
\gamma - \delta_n   = (\lambda_1 +m-n, \lambda_2+m-n, \ldots, \lambda_m+m-n, 0, 0, \ldots, 0) 
\]
which is precisely the partition $\lambda -  \Pi$. This completes the proof.

\end{proof}

\subsubsection{Frobenius character formula} 
For a partition $\nu$ of an integer $M$, let $p_\nu$ be as in \eqref{eq:partition-polynomial}. The irreducible representations of $S_{M}$ are indexed by partitions of $M$. For a partition $\pi$ of $M$, let $\chi^\pi$ be the corresponding character. For a partition $\nu$ of $M$, we denote by $\chi^\pi(\nu)$ the value of $\chi^\pi$ on the conjugacy class of $S_M$ corresponding to $\nu$. We have the \emph{Frobenius character formula} \cite[p.49, \S 4.1]{FH91}, (see also \cite[(7.8)]{Mac95} for a reference with a proof)
\begin{equation}\label{eq:Frob-char-formula}
p_\nu(A) = \sum_{|\pi|= M} \chi^\pi(\nu) s_\pi(A),
\end{equation}
where the sum runs over partitions $\pi$ of $M$. Note that $s_\pi(A)$ vanishes automatically if $\pi$ has more than $m$ parts, so the sum could be restricted to partitions with at most $m$ parts; we shall use this below. Abbreviate $\nu_k := (k-1,1^{N})$. Inserting \eqref{eq:Frob-char-formula} into \eqref{eq:integrand} with $M = N+k-1$, we find that 
\begin{align*}
 \ol {\nabla}^\eta_\mu(p_k(X)) & =  k   \Lambda_k  \int_{G/P_\mu} p_{\nu_k}(A) =  k \Lambda_k\sum_{|\pi| = k +m(n-m)} \int_{G/P_\mu}  \chi^\pi(\nu_k) s_\pi(A).
 \end{align*}
Then applying Lemma \ref{lem:A-integral}, we obtain
\begin{equation}\label{eq:a-proof-1}
 \ol {\nabla}^\eta_\mu(p_k(X))   =  (-1)^{D_\mu}  k \Lambda_k\sum_{\substack{|\pi| = k +m(n-m) \\ \len(\pi) \leq m \\ \pi \supseteq \Pi}} \chi^{\pi}(\nu_k) s_{\pi-\Pi}(X) = (-1)^{D_\mu}    k \Lambda_k \sum_{\substack{|\pi|= k \\ \len(\pi)\le m}} \chi^{\pi+\Pi}(\nu_k) s_{\pi}(X).
\end{equation}

\begin{lemma}\label{lem:schur-to-power}
    In $I/I^2$ we have 
    $$ s_\pi(X) \equiv \frac{\chi^\pi((k))}{k} p_k(X) \pmod{I^2}. $$
\end{lemma}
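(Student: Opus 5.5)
We will prove Lemma \ref{lem:schur-to-power} by expanding $s_\pi(X)$ in power sums and discarding everything that lies in $I^2$. The statement is for a partition $\pi$ with $|\pi| = k$, with $1 \leq k \leq n$ the degree in which $p_k(X)$ lives.

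The starting point is the inverse of the Frobenius character formula \eqref{eq:Frob-char-formula}. By orthogonality of characters of $S_k$, one has
\[
s_\pi(X) = \sum_{|\nu| = k} \frac{\chi^\pi(\nu)}{z_\nu} p_\nu(X),
\]
where $z_\nu = \prod_i i^{m_i} m_i!$ is the order of the centralizer of an element of cycle type $\nu$ (here $m_i$ is the number of parts of $\nu$ equal to $i$). This follows from \eqref{eq:Frob-char-formula} using the column orthogonality relation $\sum_{\pi} \chi^\pi(\nu)\chi^\pi(\nu') = z_\nu \delta_{\nu\nu'}$. We may cite it from \cite[(7.8)]{Mac95} or \cite[\S 7.17]{Stan2}. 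The identity holds in the ring of symmetric functions, so it holds after specializing to the $n$ variables $X$.

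Next we reduce modulo $I^2$. Every $p_j(X)$ with $j \geq 1$ lies in the augmentation ideal $I$. So if $\nu$ has at least two parts, $p_\nu(X) = p_{\nu_1}(X)\,p_{\nu_2}(X)\cdots$ is a product of at least two elements of $I$ and hence lies in $I^2$. Only the one-part partition $\nu = (k)$ survives. Since $z_{(k)} = k$, this gives
\[
s_\pi(X) \equiv \frac{\chi^\pi((k))}{k}\, p_k(X) \pmod{I^2}.
\]

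The only step needing care is the first one, namely having the inverse Frobenius formula with the correct normalization $1/z_\nu$; it is standard. One should also note that the formula is applied in the full ring $R^W$ rather than in the quotient, so specializing to $n$ variables causes no issue even when $k > n$. In the application $k \leq n$ anyway, and the lemma is a congruence between honest polynomials in $R^W$.
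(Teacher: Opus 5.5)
Your proposal is correct and is essentially the paper's proof: expand $s_\pi(X)$ in power sums with coefficients $\chi^\pi(\nu)/z_\nu$, note that every $p_\nu(X)$ with at least two parts lies in $I^2$, and use $z_{(k)}=k$. Your remark that the inversion of the Frobenius formula should be carried out in the ring of symmetric functions before specializing to the $n$ variables $X$ is a small point of care that the paper leaves implicit.
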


\begin{proof}
 An alternate form of the Frobenius character formula \cite[Proof of (7.6)(i)]{Mac95} says 
    \begin{equation}\label{eq:Frob-char-formula-2}
    s_\pi(X) = \sum_{|\nu|=k} z_\nu^{-1} \chi^\pi(\nu)p_\nu(X) 
    \end{equation}
    where the sum runs over all partitions $\nu$ of $k = |\pi|$, and $z_\nu$ is the size of the centralizer of a permutation with cycle type $\nu$. Explicitly, if $\nu = (1^{m_1}, 2^{m_2} \ldots)$ then $z_\nu = \prod_i i^{m_i} m_i!$.  All the terms lie in $I^2$ except the one indexed by $\nu = (k)$, and we see that $z_{\nu} = k$ in this case, which gives the result. 
\end{proof}

Inserting Lemma \ref{lem:schur-to-power} into \eqref{eq:a-proof-1}, we obtain that  
\begin{equation}
\ol {\nabla}^\eta_\mu (p_k(X)) \equiv  (-1)^{D_\mu} \Lambda_k \sum_{\substack{\pi \vdash k \\ \len(\pi)\le m}} \chi^{\pi + \Pi}(\nu_k) \chi^\pi((k)) p_k (X) \pmod{I^2}.
\end{equation}
This gives a formula for the eigenweight associated to $p_k$, 
\begin{equation}\label{eq:a-proof-2}
\epsilon_k = (-1)^{D_\mu}  \Lambda_k \sum_{\substack{|\pi| =  k \\ \len(\pi)\le m}} \chi^{\pi + \Pi}(\nu_k) \chi^\pi((k)). 
\end{equation}

\subsubsection{Application of the Murnaghan--Nakayama Rule}\label{ssec:MN-rule}
Next we will simplify $\chi^\pi((k))$ using the Murnaghan--Nakayama Rule \cite[\S 7.17]{Stan2}, which gives a recursive algorithm for computing $\chi^\pi(\nu)$ for general partitions $\pi$ and $\nu$. For the convenience of the reader, we recall its formulation.

Given a Young diagram $\cD$, a \emph{border strip} (also called \emph{rim hook}) $\beta$ is a connected path of boxes along the outer rim of $\cD$ that satisfies the conditions:
\begin{itemize}
    \item It is ``connected," meaning all its boxes touch at an edge.
    \item It does \emph{not} contain any $2 \times 2$ square of boxes.
    \item Removing the border strip leaves a valid Young diagram denoted $\pi \setminus \beta$.
\end{itemize}

The \emph{height} of a border strip is defined as
\[
\height(\beta) = (\text{Number of rows the strip occupies}) - 1.
\]

Let $\nu = (\nu_1, \nu_2, \ldots, \nu_k)$. For each border strip $\beta$, let $\pi \setminus \beta$ be the partition obtained by removing the border strip $\beta$ from the Young diagram of $\pi$. Let $\nu' := (\nu_2, \ldots)$ be the partition obtained by removing $\nu_1$ from $\nu$. The Murnaghan--Nakayama rule says that 
    \[
    \chi^\pi(\nu) = \sum_{|\beta|=\nu_1} (-1)^{\height(\beta)} \chi^{\pi \setminus \beta}(\nu')
    \]
    where the sum runs over all border strips $\beta$ of length $\nu_1$. The base cases are determined by $\chi^{\emptyset}(\emptyset) = 1$, and if at any step a border strip of the required size does not exist, then that entire branch of the calculation contributes $0$.


\begin{example}\label{ex:MN-rule-2}
We compute $\chi^{(4,3,2,1)}((3, 1^7))$ using the Murnaghan--Nakayama rule. There are 3 border strips in the Young diagram for $(4,3,2,1)$. 
\[
\ytableausetup{boxsize=1.8em}
\begin{ytableau}
  ~ & ~ & *(lightgray) ~ & *(lightgray) ~ \\
  ~ & ~ & *(lightgray) ~ \\
  ~ & ~ \\
  ~
\end{ytableau} \hspace{1cm}
\ytableausetup{boxsize=1.8em}
\begin{ytableau}
  ~ & ~ & ~ & ~ \\
  ~ & *(lightgray) ~ & *(lightgray) ~ \\
  ~ & *(lightgray) ~ \\
   ~
\end{ytableau} \hspace{1cm}
\ytableausetup{boxsize=1.8em}
\begin{ytableau}
  ~ & ~ & ~ & ~ \\
  ~ & ~ & ~ \\
  *(lightgray) ~ & *(lightgray) ~ \\
  *(lightgray) ~
\end{ytableau}
\]
Hence we have 
\[
\chi^{(4,3,2,1)}((3, 1^7))  = (-1)^1 ( \chi^{(2,2,2,1)}(1^7) + \chi^{(4,1,1,1)}(1^7) + \chi^{(4,3)}(1^7)).
\]
Each of the inner summands can be computed by the Hook Length Formula. 
\[
\ytableausetup{boxsize=1.8em} 
\begin{ytableau}
  5 & 3 \\
  4 & 2 \\
  3 & 1  \\
  1
\end{ytableau} \hspace{2cm}
\ytableausetup{boxsize=1.8em} 
\begin{ytableau}
  7 & 3 & 2 & 1 \\
  3 \\
  2 \\
  1
\end{ytableau}\hspace{1cm}
\ytableausetup{boxsize=1.8em} 
\begin{ytableau}
  5 & 4 & 3 & 1 \\
  3 & 2 & 1
\end{ytableau}
\]
This gives $\chi^{(2,2,2,1)}(1^7) = \chi^{(4,3)}(1^7) = 14$ and $\chi^{(4,1,1,1)}(1^7) = 20$. Therefore, we conclude that
\[
\chi^{(4,3,2,1)}(3, 1^7) = (-1)^1 (14+20+14) = \boxed{-48.}
\]
\end{example}

\begin{lemma}\label{lem:char-val-MN}
Suppose $\pi$ is a partition of $k$. Then $\chi^{\pi}((k))$ vanishes unless $\pi =\pi_j(k) := (k-j, 1^j)$ for $j = 0, 1, \ldots, k-1$, in which case we have
\[
\chi^{(k-j, 1^j)}((k)) =(-1)^j.
\]
\end{lemma}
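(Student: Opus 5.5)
We apply the Murnaghan--Nakayama rule recalled in \S\ref{ssec:MN-rule} with $\nu = (k)$, so that $\nu_1 = k$ and $\nu' = \emptyset$. A single application of the rule gives
\[
\chi^\pi((k)) = \sum_{|\beta| = k} (-1)^{\height(\beta)} \chi^{\pi\setminus\beta}(\emptyset).
\]
Here $\beta$ runs over border strips of length $k$ in the Young diagram of $\pi$. Since $|\pi| = k$, such a strip must consist of every box of $\pi$, so $\pi\setminus\beta = \emptyset$ and $\chi^{\emptyset}(\emptyset)=1$. Consequently $\chi^\pi((k))$ equals $(-1)^{\height(\pi)}$ if the whole diagram of $\pi$ is a border strip, and $0$ otherwise.

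The key step is to decide when the full Young diagram of $\pi$ is a border strip. Since the diagram is connected, the relevant condition is that it contain no $2\times 2$ square. The diagram contains a $2\times2$ square exactly when $\pi_2 \geq 2$, namely through the boxes in positions $(1,1),(1,2),(2,1),(2,2)$. Hence the diagram is a border strip if and only if $\pi_2 \leq 1$, which means $\pi$ is a hook $(k-j, 1^j)$ for some $0 \leq j \leq k-1$. The third condition, that removing $\beta$ leaves a valid diagram, is automatic here because the complement is empty.

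It remains to compute the sign. The hook $(k-j,1^j)$ occupies $j+1$ rows, so its height is $j$, and therefore $\chi^{(k-j,1^j)}((k)) = (-1)^j$. No step presents a real obstacle. The only point needing care is the characterization that a full diagram contains no $2\times2$ square exactly when $\pi_2\le1$, and this follows directly from the definitions.
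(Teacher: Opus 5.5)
Your proof is correct and follows the same route as the paper: one application of the Murnaghan--Nakayama rule forces the border strip to be all of $\pi$, which happens exactly when $\pi$ is a hook, and the hook $(k-j,1^j)$ has height $j$. You add a bit more detail than the paper, namely the criterion $\pi_2 \le 1$ for the absence of a $2\times 2$ square, but the argument is the same.
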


\begin{proof}
By the Murnaghan--Nakayama rule, the character $\chi^\pi((k))$ is non-zero only if $\pi$ is a border strip for itself, which occurs if and only if $\pi$ is a hook. The partitions of $k$ which are hooks are $\pi_j(k) :=(k-j, 1^j)$ for $j=0, \ldots, k-1$. Since $\pi_j$ has height $j$, the Murnaghan--Nakayama rule implies that $\chi^{\pi_j}((k)) = (-1)^j$.
    \end{proof}

We apply Lemma \ref{lem:char-val-MN} to simplify \eqref{eq:a-proof-2}. Note that the constraint $\len(\pi)=j+1\le m$ implies $j\le m-1$. Putting these observations into \eqref{eq:a-proof-2}, we arrive at the desired formula
\[
\epsilon_k= (-1)^{D_\mu}  \Lambda_k \sum_{j=0}^{\min(k-1, m-1)} (-1)^j \chi^{\pi_j(k) + \Pi}(\nu_k).
\]    
This finally completes the proof of Theorem \ref{thm:A-main}. \qed 

\subsection{Examples}\label{ssec:a-special-cases}

We explain how Theorem \ref{thm:A-main} specializes to the results of \cite{FYZ4} for $m=1$ and $m=2$.

\subsubsection{The case $m=1$}
Fix $n \geq 2$ and $m=1$. Applying Theorem \ref{thm:A-main}, the sum only has a contribution from $j=0$, in which case, $\pi_j = (k)$, and $\pi_j(k) + (n-m)^m = (n+k-1)$. Thus we have
\[
\epsilon_k = (-1)^{n-1} \chi^{(n+k-1)} ((k-1, 1^n))
\]
which by the Murnaghan--Nakayama rule equals $(-1)^{n-1}  \chi^{(n)}(1^n) = (-1)^{n-1} $. This recovers \cite[Proposition 4.4.1]{FYZ4}. 

\subsubsection{The case $m=2$}
Fix $n \geq 3$ and $m=2$. Assume $k \geq 2$; the case $k=1$ is similar but simpler, with some notation changes needed. We have $\Pi = (n-2,n-2)$ and $\nu_k = (k-1, 1^{2(n-2)+1})$. Note that $(-1)^{N-1} = (-1)^{2(n-2)} = 1$ in this case. 

Applying Theorem \ref{thm:A-main}, we see that only $j=0$ and $j=1$ contribute to the sum. Thus we have
    \begin{equation}\label{eq:m=2eps}
    \epsilon_k = \chi^{(k+n-2, n-2)}((k-1, 1^{2(n-2)+1}))  -  \chi^{(k+n-3, n-1)} ((k-1, 1^{2(n-2)+1})).
    \end{equation}
    We evaluate these using the Murnaghan-Nakayama rule. 
    \begin{itemize}
    \item First we calculate $\chi^{(k+n-2, n-2)}((k-1, 1^{2(n-2)+1})) $. The only possible border strips are horizontal (height $0$) of size $k-1$ in the first row, or (if $k<n$) the second row, giving 
    \[
    \chi^{(k+n-2, n-2)}((k-1, 1^{2(n-2)+1}))  = \chi^{(n-1,n-2)}(\Id) + \chi^{(k+n-2, n-k-1)}(\Id).
    \]

\item Next we calculate the term $\chi^{(k+n-3, n-1)} ((k-1, 1^{2(n-2)+1})) $. The only border strip is a horizontal one (height 0) in the second row. Thus we have 
\[
\chi^{(k+n-3, n-1)} ((k-1, 1^{2(n-2)+1})) = \chi^{(k+n-3, n-k)}(\Id).
\]
\end{itemize}
Plugging these back into \eqref{eq:m=2eps}, we arrive at 
\begin{equation}\label{eq:m=2eps-2}
\epsilon_k = \chi^{(n-1,n-2)}(\Id) + \chi^{(k+n-2, n-k-1)}(\Id) - \chi^{(k+n-3, n-k)}(\Id).
\end{equation}

For a partition of the form $\lambda = (\lambda_1, \lambda_2)$, the Hook Length Formula \eqref{eq:hook-length-formula} specializes to
$$
\dim(\lambda_1, \lambda_2) = \frac{N! (\lambda_1 - \lambda_2 + 1)}{(\lambda_1+1)! \lambda_2!}
$$
In particular, we have: 
\begin{align*}
\chi^{(n-1,n-2)}(\Id)  &= \frac{(2n-3)!  2}{n! (n-2)!} \\
\chi^{(k+n-2, n-k-1)}(\Id) & =  \frac{(2n-3)! (2k)}{(k+n-1)! (n-k-1)!}  \\ 
\chi^{(k+n-3, n-k)}(\Id) &= \frac{(2n-3)! (2k-2)}{(k+n-2)! (n-k)!}
\end{align*}
Substituting these into \eqref{eq:m=2eps-2}, we obtain
\begin{equation}
\epsilon_k = \frac{(2n-3)!  2}{n! (n-2)!} + \frac{(2n-3)! (2k)}{(k+n-1)! (n-k-1)!} -  \frac{(2n-3)! (2k-2)}{(k+n-2)! (n-k)!}.
\end{equation}
After some tedious simplification, this agrees with \eqref{eq:FYZ-epsilon-2}, thus recovering \cite[Proposition 5.4.1]{FYZ4}. 

\section{Eigenweights for type C}

\subsection{Setup} 

Fix $n \geq 1$ and let $G=\PSp_{2n}$. Then we have $R = \rH^*(\BB T; \Q) = \Q[x_1, \ldots, x_n]$, with $\deg x_i = 2$. The Weyl group $W$ (Type $C_n$) consists of signed permutations on the $x_i$. Let $X := \{x_1, \ldots, x_n\}$ and $X^{(2)}:=\{x_1^2, \ldots, x_n^2\}$. The invariant ring is $R^W = \Q[p_1^{(2)}, \ldots, p_n^{(2)}]$ where 
\[
\underbrace{p_k^{(2)}}_{\deg 4k} := p_k(X^{(2)}) \quad \text{is the $k$-th power sum in $X^{(2)}$}.
\]
A basis for $\VV := I/I^2$ is $\{p_k^{(2)}\}_{k=1}^n$.

\subsubsection{Minuscule coweights}
The unique minuscule coweight of $G$ is $\mu = \frac{1}{2}(1, \ldots, 1)$. The Levi subgroup $L_\mu$ is of type $A_{n-1}$, and its Weyl group is $W_\mu=S_n$. The invariant ring $R^{W_\mu}$ is the ring of symmetric polynomials in $X$. The dimension of $G/P_\mu$ is $D_\mu := \binom{n+1}{2}$. Let $N := D_\mu+1$. 

\subsubsection{Differential operator}
The differential operator $\partial_\mu$ expressed in these terms is 
\[
\partial_\mu = \frac{1}{2} \sum_{i=1}^n \pderiv{}{x_i}.
\]
This operator carries $R^W$ to $R^{W_\mu}$.

\subsubsection{Casimir} We take the Casimir element $\Omega = \frac{1}{2}\sum_{i=1}^n x_i^2$. Then we have 
\[
t_\mu = \partial_\mu(\Omega) = \frac{1}{2}(x_1 + \ldots + x_n) = \frac{1}{2}p_1(X),
\] 
where again $p_i$ refers to the $i$th power sum. Thus $\eta = 2^{-N} p_1(X)^{N}$. 

\subsection{Proof of Theorem \ref{thm:C-main}}
We consider the operator $\ol {\nabla}^\eta_\mu$ on $\VV := I/I^2$ defined by the formula 
\[
\ol {\nabla}^\eta_\mu (f) = 2^{-N}  \int_{G/P_\mu} (p_1(X)^{N} \partial_\mu f) .
\]

Note that $\VV = I/I^2$ is 1-dimensional in each degree where it is non-zero. Recalling that $X^{(2)} := \{x_1^2, \ldots, x_n^2\}$, the power sums $\{p_i^{(2)} = p_i(X^{(2)})\}_{i=1}^n$ form a homogeneous degree basis for $\VV$, so they are eigenvectors for $\ol {\nabla}^\eta_\mu$. We wish to find the associated eigenweights $\epsilon_k(\Omega, \mu)$ such that 
\[
\ol {\nabla}^\eta_\mu (p_k^{(2)}) \equiv \epsilon_k(\Omega, \mu) p_k^{(2)} \pmod{I^2}.
\]

\subsubsection{Integration}

Let $\rho_n = (n, n-1, \ldots, 1)$. 

\begin{lemma}\label{lem:C-integration}
Let $\lambda$ be a partition with at most $n$ parts. The Gysin map acts on Schur polynomials as follows:
$$
\int_{G/P_\mu} s_\lambda(X) = \begin{cases} (-1)^{D_\mu} s_\pi(X^{(2)}) & \text{if } \lambda = 2\pi+\rho_n \text{ for some partition } \pi, \\ 0 & \text{otherwise.} \end{cases}
$$
\end{lemma}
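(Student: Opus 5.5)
The plan is to imitate the proof of Lemma \ref{lem:A-integral}: apply the localization formula \eqref{Av w f/R} and use the bialternant formula \eqref{eq:bialternant} for $s_\lambda(X)$. In this case $W/W_\mu$ is represented by the sign changes $\varepsilon\in\{\pm1\}^n$.

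\textbf{Step 1: the equivariant Euler class.} The roots of type $C_n$ are $\pm x_i\pm x_j$ ($i<j$) and $\pm 2x_i$. Since $\mu=\frac12(1,\dots,1)$, the roots with $\j{\a,\mu}<0$ are $-(x_i+x_j)$ for $i<j$ and $-2x_i$. Hence
\[
\frR_\mu=(-1)^{D_\mu}\,2^n\prod_{i=1}^n x_i\prod_{i<j}(x_i+x_j),
\]
with $D_\mu=\binom{n}{2}+n=\binom{n+1}{2}$. Multiplying by $\Delta(X)=\prod_{i<j}(x_i-x_j)$ gives
\[
\Delta(X)\,\frR_\mu=(-1)^{D_\mu}2^n\,\cD_C,\qquad \cD_C:=\prod_i x_i\prod_{i<j}(x_i^2-x_j^2)=\prod_i x_i\cdot\Delta(X^{(2)}).
\]
This $\cD_C$ is the type $C$ Weyl denominator. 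It is anti-invariant under the full signed permutation group $W$, with the sign character $\operatorname{sgn}_W$. It also equals the alternant $\det(x_i^{2(n-j)+1})$, which is the $W$-antisymmetrization of $X^{\rho_n}$ up to a factor $2^n$.

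\textbf{Step 2: assembling the antisymmetrization.} Write $s_\lambda(X)=a_{\lambda+\delta_n}(X)/\Delta(X)$. Then
\[
\frac{s_\lambda(X)}{\frR_\mu}=\frac{(-1)^{D_\mu}\,a_{\lambda+\delta_n}(X)}{2^n\,\cD_C}.
\]
Since $\cD_C$ is $W$-anti-invariant, the denominator comes out of the sum over $\varepsilon\in W/W_\mu$ with a factor $\operatorname{sgn}_W(\varepsilon)$, exactly as in \eqref{eq:schur-polynomial}. The numerator $a_{\lambda+\delta_n}$ is already $S_n$-antisymmetrized, so, as in \eqref{eq:numerator-weyl-sum},
\[
\int_{G/P_\mu}s_\lambda(X)=\frac{(-1)^{D_\mu}}{2^n\,\cD_C}\sum_{w\in W}\operatorname{sgn}_W(w)\,w\bigl(X^{\gamma}\bigr),\qquad \gamma=\lambda+\delta_n .
\]
Flipping the sign of $x_i$ contributes $\operatorname{sgn}_W\cdot(-1)^{\gamma_i}=-(-1)^{\gamma_i}$. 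The sum over sign changes therefore vanishes unless every $\gamma_i$ is odd, and in that case it contributes a factor $2^n$, which cancels the $2^n$ in the denominator.

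\textbf{Step 3: identifying the result.} When all $\gamma_i$ are odd, put $\gamma_i=2\alpha_i+1$. The remaining $S_n$-antisymmetrization is $\prod_i x_i\cdot a_\alpha(X^{(2)})$. Dividing by $\cD_C$ and applying the bialternant formula in the variables $X^{(2)}$ gives $s_{\alpha-\delta_n}(X^{(2)})$. Now
\[
\alpha-\delta_n=\frac{\lambda+\delta_n-\mathbf 1-2\delta_n}{2}=\frac{\lambda-\rho_n}{2},
\]
where $\mathbf 1=(1,\dots,1)$ and we use $\rho_n=\delta_n+\mathbf 1$.

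It remains to check that the condition matches the statement.
\begin{itemize}
\item The requirement that all $\gamma_i$ be odd says $\lambda_i\equiv n-i+1=(\rho_n)_i \pmod 2$ for every $i$.
\item Given this, consecutive parts $\lambda_i,\lambda_{i+1}$ have opposite parity, so $\lambda_i-\lambda_{i+1}\ge 1$ and is odd. Hence $\pi_i-\pi_{i+1}=(\lambda_i-\lambda_{i+1}-1)/2\ge 0$.
\item The last part $\lambda_n$ is odd, so $\lambda_n\ge1$ and $\pi_n\ge 0$.
\end{itemize}
Thus $\pi=(\lambda-\rho_n)/2$ is a genuine partition, and the condition is exactly $\lambda=2\pi+\rho_n$. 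In all other cases the integral vanishes.

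\textbf{Main obstacle.} The real content is small; the risk is bookkeeping. One must fix the sign convention for $\operatorname{sgn}_W$ and for the orientation $\frR_\mu$ consistently, so that the overall sign is exactly $(-1)^{D_\mu}$ with no stray $(-1)^n$ or power of $2$. I would pin down the conventions using the case $n=1$ ($G/P_\mu=\mathbf P^1$): there $\int x^{\lambda}=\bigl(x^\lambda-(-x)^\lambda\bigr)/(-2x)$, which equals $-x^{2\pi}$ when $\lambda=2\pi+1$ and $0$ when $\lambda$ is even, as the lemma predicts.
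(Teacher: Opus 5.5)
Your proposal is correct and follows essentially the same route as the paper: you use localization via \eqref{Av w f/R}, the bialternant formula, the factorization $\Delta(X)\frR_\mu=(-1)^{D_\mu}2^n e_n(X)\Delta(X^{(2)})$, and a parity argument over the sign changes that forces every entry of $\lambda+\delta_n$ to be odd. The only difference is bookkeeping. The paper divides out $e_n(X)$ first and applies the projector onto $K$-invariant (even-exponent) monomials. You instead keep $e_n(X)\Delta(X^{(2)})$ together as the $W$-anti-invariant type $C$ Weyl denominator and antisymmetrize over all of $W$, mirroring the type A proof.
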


\begin{proof} We will apply \cite[Lemma 5.4.4]{FYZ4} to $s_\lambda(X)$. In preparation, we analyze $s_\lambda(X)/\frR_\mu$. By the bialternant formula \eqref{eq:bialternant}, we have $s_\lambda(X) = a_{\lambda + \delta_n}(X)/\Delta(X)$. Writing $R^+(X):=\prod_{i<j}(x_i+x_j)$, we have
\[
\frR_\mu = (-1)^{D_\mu} \prod_{i=1}^n (2x_i) \prod_{i<j} (x_i+x_j) = (-1)^{D_\mu} 2^n e_n(X) R^+(X).
\]
Observe the identity $\Delta(X^{(2)}) = \Delta(X)R^+(X)$. Hence we may rewrite
$$ \frac{s_\lambda(X)}{\frR_\mu} = \frac{(-1)^{D_\mu} a_{\lambda + \delta_n}(X)}{2^n e_n(X) \Delta(X) R^+(X)} = \frac{(-1)^{D_\mu} a_{\lambda + \delta_n}(X)}{2^n e_n(X) \Delta(X^{(2)})}. $$

The quotient $W/W_\mu$ is the group $K \cong \{\pm 1\}^n$ of sign changes on $X$. Then $\mathcal{P}_K = \frac{1}{2^n}\sum_{\sigma \in K} \sigma(\cdot)$ can be interpreted as the projection operator onto the $K$-invariant subspace $R^K=\Q[X^{(2)}]$. Note that $\Delta(X^{(2)})$ is $K$-invariant. Applying these observations, \cite[Lemma 5.4.4]{FYZ4} says that
\begin{equation}\label{eq:C-proof-eq-intermediate}
\int_{G/P_\mu} s_\lambda(X) = \frac{(-1)^{D_\mu}}{\Delta(X^{(2)})} \mathcal{P}_K\left(\frac{a_{\lambda + \delta_n}(X)}{e_n(X)}\right).
\end{equation}
Let $h_\gamma(X) := a_\gamma(X)/e_n(X) = \sum_{w \in S_n} \mathrm{sgn}(w) X^{w(\gamma)-(1^n)}$.
The operator $\mathcal{P}_K$ extracts the monomials with purely even exponents. A term $X^\beta$ in $h_\gamma(X)$ has exponent $\beta=w(\gamma)-(1^n)$. This is even if and only if $w(\gamma)$ is purely odd, which is equivalent to $\gamma$ itself being purely odd. Hence we see that if $\lambda + \delta_n$ is not purely odd, then $\int_{G/P_\mu} s_\lambda (X)=0$.

If $\lambda+\delta_n$ is purely odd, we may write $\lambda+\delta_n=2\kappa+(1^n)$ for another partition $\kappa$. Since $\lambda+\delta_n$ is strictly decreasing, $\kappa$ is also strictly decreasing. Since $w(\lambda+\delta_n)-(1^n)=2w(\kappa)$ is even, $h_{\lambda+\delta_n}(X)$ is $K$-invariant, and $\mathcal{P}_K(h_{\lambda+\delta_n}(X))=h_{\lambda+\delta_n}(X)$. Hence
$$ h_{\lambda+\delta_n}(X) = \sum_{w \in S_n} \mathrm{sgn}(w) X^{2w(\kappa)} = \sum_{w \in S_n} \mathrm{sgn}(w) (X^{(2)})^{w(\kappa)} = a_\kappa(X^{(2)}). $$
Putting this into \eqref{eq:C-proof-eq-intermediate}, we find that
\[
\int_{G/P_\mu} s_\lambda(X) = (-1)^{D_\mu} \frac{a_\kappa(X^{(2)})}{\Delta(X^{(2)})}
\]
where $\lambda + \delta_n = 2 \kappa  + (1^n)$. Since $\kappa$ is strictly decreasing, we can write $\kappa=\pi+\delta_n$ for a unique partition $\pi$. By another application of \eqref{eq:bialternant}, we recognize $a_\kappa(X^{(2)})/\Delta(X^{(2)})  = s_\pi(X^{(2)})$ where $\lambda = 2 \pi + \rho_n$. This completes the proof. 
\end{proof}

\subsubsection{Action on $p_k^{(2)}$}
We calculate the action of $\ol{\nabla}^\eta_\mu$ on $p_k^{(2)}$ for $k=1, \ldots, n$. Note that 
$$ \partial_\mu p_k^{(2)} = \frac{1}{2} \sum_{j=1}^n \frac{\partial}{\partial x_j} \sum_{i=1}^n x_i^{2k} = k \sum_{j=1}^n x_j^{2k-1} = k p_{2k-1}(X). $$
Hence we have
$$ \ol{\nabla}^\eta_\mu(p_k^{(2)}) =  2^{-N} k \int_{G/P_\mu} p_1(X)^N p_{2k-1}(X) =  2^{-N} k  \int_{G/P_\mu} p_{\nu_k}(X), $$
where $\nu_k = (2k-1, 1^N)$. 

Applying the Frobenius character formula \eqref{eq:Frob-char-formula}, we see that 
$$ 
\ol{\nabla}^\eta_\mu(p_k^{(2)}) = 2^{-N} k  \sum_{|\lambda|=2k-1+N} \chi^\lambda(\nu_k) \int_{G/P_\mu} s_\lambda(X). 
$$
By Lemma \ref{lem:C-integration}, the sum is concentrated on $\lambda$ such that $\lambda=2\pi+\rho_n$, which implies that $|\pi|=k$. So we have 
\begin{equation}\label{eq:C-eigenweight-eq1}
\ol{\nabla}^\eta_\mu(p_k^{(2)}) = (-1)^{D_\mu} 2^{-N} k  \sum_{|\pi|=k} \chi^{2\pi+\rho_n}(\nu_k) s_\pi(X^{(2)}).
\end{equation}

We project this expression onto $\VV=I/I^2$. By \eqref{eq:Frob-char-formula-2}, when $|\pi|=k>0$ we have
$$ s_\pi(X^{(2)}) \equiv \frac{\chi^\pi((k))}{k} p_k^{(2)} \pmod{I^2}. $$
Substituting this back into \eqref{eq:C-eigenweight-eq1} gives
\begin{equation}\label{eq:C-eigenweight-eq2}
\ol{\nabla}^\eta_\mu(p_k^{(2)}) \equiv (-1)^{D_\mu} 2^{-N} \left(  \sum_{|\pi|=k} \chi^\pi((k)) \chi^{2\pi+\rho_n}(\nu_k) \right) p_k^{(2)} \pmod{I^2}.
\end{equation}
As we saw in Lemma \ref{lem:char-val-MN}, $\chi^\pi((k))$ is non-zero only if $\pi = \pi_j(k) = (k-j, 1^j)$ for $j=0, \ldots, k-1$, in which case we have $\chi^{\pi_j(k)}((k)) = (-1)^j$. Substituting this into \eqref{eq:C-eigenweight-eq2}, we obtain
$$ \epsilon_k(\Omega, \mu) = (-1)^{D_\mu} 2^{-N} \sum_{j=0}^{k-1} (-1)^j \chi^{2\pi_j(k)+\rho_n}(\nu_k). $$
This completes the proof of Theorem \ref{thm:C-main}. \qed 

\begin{example}
For $n=3$, Theorem \ref{thm:C-main} gives
\[
(\epsilon_1, \epsilon_2, \epsilon_3) = 2^{-4}\cdot (8,5,8)
\]
 and for $n=4$, Theorem \ref{thm:C-main} gives
 \[
( \epsilon_1, \epsilon_2, \epsilon_3, \epsilon_4) = 2^{-4}\cdot (44, 19, 28, 41).
 \]
\end{example}

\section{Eigenweights for type D}

\subsection{Setup}

Let $G=\PSO_{2n}$ with $n\geq 2$. Then $R=\Q[x_1, \ldots, x_n]$ with $\deg(x_i)=2$. Let $X := \{x_1, \ldots, x_n\}$ and $X^{(2)} := \{x_1^2, \ldots, x_n^2\}$. The Weyl group $W=W(D_n)$ consists of even signed permutations of $X$. The invariant ring is $R^W = \Q[p_1^{(2)}, \ldots, p_{n-1}^{(2)}, \Pf]$, where 
\[
\underbrace{p_k^{(2)}}_{\deg 4k} := p_k(X^{(2)}) = \sum_{i=1}^{n} x_i^{2k}   \quad \text{ and } \quad \underbrace{\Pf}_{\deg 2n} := \prod_{i=1}^n x_i. 
\]
The first augmentation quotient $\VV = I/I^2$ has basis $p_1^{(2)}, \ldots, \Pf$. 

\subsubsection{Minuscule coweights} In terms of the usual presentation of the $D_n$ root system $\Phi = \{ \pm x_i  \pm x_j | i \neq j\}$, the minuscule coweights are:
\begin{itemize}
\item $\omega_1^\vee = (1, 0, \ldots, 0)$, corresponding to the vector representation of $\SO_{2n}$. 
\item $\omega_{n}^\vee = \frac{1}{2}(1, 1, \ldots, 1)$ and $\omega_{n-1}^\vee = \frac{1}{2}(1, 1, \ldots, 1, -1)$. These correspond to the two spinor representations of $\Spin_{2n}$. They are exchanged by the order 2 outer automorphism coming from the flip of the Dynkin diagram.  
\end{itemize} 
The eigenweights for $\omega_1^\vee$ were computed in \cite{FYZ4}. Here we will undertake the computation of eigenweights of the spinor coweights, which is substantially more difficult. Since these are exchanged by an automorphism, their associated eigenweights will coincide, so it suffices to focus on $\mu := \omega_n^\vee$.

The Levi subgroup $L_\mu$ is of type $A_{n-1}$, and its Weyl group is $W_\mu=S_n$. The invariant ring $R^{W_\mu}$ is the ring of symmetric polynomials in $X$. The dimension of $G/P_\mu$ is $D_\mu := \binom{n}{2}$. Let $N := D_\mu+1$. 

\subsubsection{Differential operator} The differential operator $\partial_\mu$ expressed in these terms is 
\[
\partial_\mu = \frac{1}{2} \sum_{i=1}^n \pderiv{}{x_i}.
\]
This operator carries $R^W$ to $R^{W_\mu}$.

\subsubsection{Casimir} We take the Casimir element $\Omega = \frac{1}{2}\sum_{i=1}^n x_i^2$. Then we have 
\[
t_\mu = \partial_\mu(\Omega) = \frac{1}{2}(x_1 + \ldots + x_n) =  \frac{1}{2}p_1(X).
\] 
Thus $\eta = 2^{-N} p_1(X)^{N}$. 

\subsection{Proof of Theorem \ref{thm:D-main}}
We consider the operator $\ol {\nabla}^\eta_\mu$ on $\VV := I/I^2$ defined by the formula 
\[
\ol {\nabla}^\eta_\mu (f) = 2^{-N}  \int_{G/P_\mu} (p_1(X)^{N} \partial_\mu f) .
\]

Note that $\VV = I/I^2$ is 1-dimensional in each degree where it is non-zero, \emph{unless} $n=2m$ is even, in which case the $2n=4m$-graded piece $\VV_{2n}$ is spanned by $p_m^{(2)}$ and $\Pf$. Hence if $n$ is odd, then $\{p_1^{(2)}, \ldots, p_{n-1}^{(2)}, \Pf\}$ forms a homogeneous basis for $\VV$, so we know a priori that they will be eigenvectors. We wish to find the associated eigenweights $\epsilon_k(\Omega, \mu)$ and $\epsilon_{\Pf}(\Omega, \mu)$ such that 
\[
\ol {\nabla}^\eta_\mu (p_k^{(2)}) \equiv \epsilon_k(\Omega, \mu) p_k^{(2)} \pmod{I^2} \quad \text{ and } \quad \ol {\nabla}^\eta_\mu(\Pf) = \epsilon_{\Pf}(\Omega, \mu) \Pf \pmod{I^2}.
\]
When $n=2m$, this is still the case except in grading degree $2n$, where $\VV_{2n}$ is spanned by $p_m^{(2)}$ and $\Pf$. On $\VV_{2n}$, the action of $\ol {\nabla}^\eta_\mu$ has the form 
\begin{equation}
\begin{pmatrix} A_m & B_m \\ C_m & D_m \end{pmatrix}
\end{equation}
whose entries we will calculate explicitly. In particular, we will show that $B_m$ and $C_m$ can be non-zero, which implies that $\ol \nabla_{\omega_n^\vee}$ need not commute with $\ol \nabla_{\omega_1^\vee}$, addressing a question from \cite{FYZ4}.


\subsubsection{Integration}
We consider the integration map $\int_{G/P_\mu}$ applied to Schur polynomials in $X$.

\begin{lemma} \label{lem:integral}
Recall that $\delta_n = (n-1, n-2, \ldots, 0)$. Let $\lambda$ be a partition with at most $n$ parts.
\begin{enumerate}
    \item If $\lambda+\delta_n$ has entries of mixed parity, then 
    \[
    \int_{G/P_\mu} s_\lambda(X) = 0.
    \]
    \item \textbf{Case E (Even):} If $\lambda+\delta_n = 2\kappa$ for some partition $\kappa$, then
    \[ 
    \int_{G/P_\mu} s_\lambda(X) = (-1)^{D_\mu} 2^{n-1} s_\pi(X^{(2)}) 
    \]
    where $\pi := \kappa - \delta_n$. We write $\lambda_E(\pi) := 2\pi + \delta_n$.
    \item \textbf{Case O (Odd):} If $\lambda+\delta_n = 2\kappa + (1^n)$ for some partition $\kappa$, then
    \[ 
    \int_{G/P_\mu} s_\lambda(X) = (-1)^{D_\mu} 2^{n-1} \Pf s_\pi(X^{(2)})
    \]
    where $\pi := \kappa - \delta_n$.
We write $\lambda_O(\pi) := 2\pi + \delta_n + (1^n)$. 
\end{enumerate}
\end{lemma}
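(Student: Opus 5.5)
I will follow the proof of Lemma \ref{lem:C-integration} almost verbatim, with the type $C$ data replaced by type $D$ data. First, I compute the equivariant top Chern class from \eqref{eq:equivariant-top-Chern}. The roots $\alpha$ with $\langle \alpha, \mu\rangle < 0$ for $\mu = \frac12(1,\dots,1)$ are exactly $-(x_i+x_j)$ for $i<j$; there are no long roots $2x_i$ in type $D$. Hence
\[
\frR_\mu = (-1)^{D_\mu} R^+(X),
\]
and therefore
\[
\frac{s_\lambda(X)}{\frR_\mu} = (-1)^{D_\mu}\frac{a_{\lambda+\delta_n}(X)}{\Delta(X^{(2)})},
\]
using $\Delta(X)R^+(X)=\Delta(X^{(2)})$ and the bialternant formula \eqref{eq:bialternant}.

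Next I identify the quotient $W/W_\mu$. Representatives are the group $K^+\subset\{\pm1\}^n$ of \emph{even} sign changes, which has order $2^{n-1}$. Since $\Delta(X^{(2)})$ is $K^+$-invariant, \cite[Lemma 5.4.4]{FYZ4} gives
\[
\int_{G/P_\mu} s_\lambda(X) = \frac{(-1)^{D_\mu}}{\Delta(X^{(2)})}\sum_{\sigma\in K^+}\sigma\bigl(a_{\lambda+\delta_n}(X)\bigr) = \frac{(-1)^{D_\mu}2^{n-1}}{\Delta(X^{(2)})}\mathcal{P}_{K^+}\bigl(a_{\lambda+\delta_n}(X)\bigr),
\]
where $\mathcal{P}_{K^+}$ is the averaging projector. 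The key combinatorial point is to describe this projector on monomials. An even sign change $\sigma$ acts on $X^\beta$ by the character $\prod_{i\in S}(-1)^{\beta_i}$, where $S$ is the set of flipped coordinates. This character is trivial on all of $K^+$ if and only if all $\beta_i$ have the same parity. So $\mathcal{P}_{K^+}$ keeps exactly those monomials whose exponents are either all even or all odd, and kills every other monomial.

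Now I apply this to $a_\gamma(X)=\sum_{w}\operatorname{sgn}(w)X^{w(\gamma)}$ with $\gamma=\lambda+\delta_n$. Each monomial $X^{w(\gamma)}$ has the same multiset of parities as $\gamma$ itself, and distinct $w$ give distinct monomials because $\gamma$ is strictly decreasing. Three cases follow.
\begin{enumerate}
\item If $\gamma$ has mixed parities, every monomial is killed, and the integral vanishes. This proves (1).
\item If $\gamma=2\kappa$, then $\mathcal{P}_{K^+}$ fixes $a_\gamma(X)$, and $a_\gamma(X)=a_\kappa(X^{(2)})$.
\item If $\gamma=2\kappa+(1^n)$, then $a_\gamma(X)$ is again fixed, and factoring out $e_n(X)=\Pf$ gives $a_\gamma(X)=\Pf\cdot a_\kappa(X^{(2)})$.
\end{enumerate}
In both of the last two cases $\kappa$ is strictly decreasing. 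Here I must check that its last entry is $\ge 0$ so that I can write $\kappa=\pi+\delta_n$ with $\pi$ a partition. This holds because $\kappa_n\geq0$ and consecutive entries of $\kappa$ differ by at least $1$. Then $a_\kappa(X^{(2)})/\Delta(X^{(2)})=s_\pi(X^{(2)})$ by \eqref{eq:bialternant} in the variables $X^{(2)}$, which yields the stated formulas (2) and (3), including the factor $2^{n-1}=|K^+|$.

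The main obstacle is the characterization of the $K^+$-invariant monomials. Unlike type $C$, where only all-even exponents survive, here both the all-even and the all-odd classes survive. The constant $2^{n-1}$ also has to be tracked correctly: it comes from summing over $W/W_\mu$ rather than averaging over it. Everything else is a direct transcription of the type $C$ argument.
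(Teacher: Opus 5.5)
Your proposal is correct and follows essentially the same route as the paper. You compute $\frR_\mu=(-1)^{D_\mu}\prod_{i<j}(x_i+x_j)$, use $\Delta(X^{(2)})=\Delta(X)\prod_{i<j}(x_i+x_j)$ together with \cite[Lemma 5.4.4]{FYZ4} to reduce to $\sum_{\sigma}\sigma(a_{\lambda+\delta_n}(X))$ over the $2^{n-1}$ even sign changes, and note that the sign character on each monomial is trivial on that group exactly when all exponents share a parity. Phrasing this via the averaging projector rather than the paper's direct character argument is only cosmetic, and your check that $\kappa-\delta_n$ is a partition spells out a detail the paper leaves implicit in type $D$.
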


\begin{proof} We prepare to apply \cite[Lemma 5.4.4]{FYZ4}. We have $W/W_\mu \cong K$, the group of even sign changes of $X$. In particular, $|K| = 2^{n-1}$. 
We have
\[ 
\mathfrak{R}_\mu = (-1)^{D_\mu} \prod_{1 \leq i < j \leq n} (x_i + x_j).
\]
Let $\Delta(X) = \prod_{1 \leq i < j \leq n} (x_i - x_j)$ be the Vandermonde determinant. Using the bialternant formula \eqref{eq:bialternant}, \cite[Lemma 5.4.4]{FYZ4} says that
\begin{equation}\label{eq:D-proof-1}
\int_{G/P_\mu} s_\lambda(X) = \sum_{\sigma \in K} \sigma\left( \frac{a_{\lambda+\delta_n}(X)}{\Delta(X)\frR_\mu} \right).
\end{equation}

Observing that $\Delta(X^{(2)}) = (-1)^{D_\mu} \Delta(X)\frR_\mu$, we can rewrite \eqref{eq:D-proof-1} as 
\begin{equation}\label{eq:D-integration}
\int_{G/P_\mu} s_\lambda(X)  = \frac{(-1)^{D_\mu}}{\Delta(X^{(2)})} \sum_{\sigma \in K} \sigma(a_{\lambda+\delta_n}(X)). 
\end{equation}
Set $\gamma := \lambda+\delta_n$. The action of $\sigma = (\sigma_j)_{j=1}^n \in K$ on $a_\gamma(X)$ is 
\[
\sigma(a_\gamma(X)) = \sum_{w \in S_n} \mathrm{sgn}(w) (\prod_{j=1}^n \sigma_{w(j)}^{\gamma_j}) X^{w(\gamma)}.
\]
Therefore, the sum over $K$ vanishes unless the character $\sigma \mapsto \prod_j \sigma_j^{\gamma_j}$ is trivial on $K$, which occurs if and only if all $\gamma_j$ have the same parity. This proves assertion (1). 

If the $\gamma_j$ do all have the same parity, then we have $\sigma(a_\gamma(X)) = a_\gamma(X)$ for all $\sigma \in K$, so that 
\[
\sum_{\sigma \in K} \sigma(a_{\lambda+\delta_n}(X)) = 2^{n-1} a_{\lambda+\delta_n}(X).
\]

Suppose we are in Case $E$, with $\lambda + \delta_n = 2\kappa$. Then we may write 
\[
a_{\lambda+\delta_n}(X) = a_{2\kappa}(X) = a_{\kappa}(X^{(2)}) = a_{\pi + \delta_n}(X^{(2)}) \quad \text{for $\pi = \kappa-\delta_n$.}
\]
Putting this into \eqref{eq:D-integration} yields 
\[
\int_{G/P_\mu} s_{\lambda}(X) = (-1)^{D_\mu} 2^{n-1} \frac{a_{\pi + \delta_n}(X^{(2)})}{\Delta(X^{(2)})} = (-1)^{D_\mu} 2^{n-1} s_{\pi}(X^{(2)}).
\]  
For Case O, the result follows from a similar analysis, using that $\Pf = x_1 x_2 \cdots x_n  = e_n(X) =  X^{(1^n)}$. 
\end{proof}

\subsubsection{Action on $p_k^{(2)}$}

We analyze the action of $\ol {\nabla}^\eta_\mu$ on the generators $\{p_k^{(2)}\}_{1 \leq k \leq n-1}$ and $\Pf$ modulo $I^2$.
For a partition $\pi$ with $|\pi| = k > 0$, we have from Lemma \ref{lem:schur-to-power} that \begin{equation}\label{eq:schur-to-power-2}
s_\pi(X^{(2)}) \equiv \frac{\chi^\pi((k))}{k} p_k^{(2)} \pmod{I^2}.
\end{equation}
By Lemma \ref{lem:char-val-MN}, $\chi^\pi((k))$ vanishes unless $\pi = \pi_j(k) = (k-j, 1^j)$ for $0 \leq j < k$, in which case we have $\chi^{\pi_j(k)}((k)) = (-1)^j$.

For $1 \leq k \leq n-1$, we have $\partial_\mu (p_k^{(2)}) = k p_{2k-1}(X)$. Then
\begin{equation}\label{eq:D-integrand}
\eta \partial_\mu p_k^{(2)} = 2^{-N}  k p_1(X)^{N} p_{2k-1}(X)  = 2^{-N}  k p_{\nu_k}(X),
\end{equation}
where $\nu_k = (2k-1, 1^{N})$ is a partition of $N+2k-1$. By the Frobenius character formula \eqref{eq:Frob-char-formula}, we can rewrite  
\[
p_{\nu_k}(X)  =  \sum_{|\lambda|=N+2k-1} \chi^\lambda(\nu_k) s_\lambda(X).
\]
Substituting this into \eqref{eq:D-integrand}, we have
\begin{equation}\label{eq:D-proof-2}
\ol {\nabla}^\eta_\mu(p_k^{(2)})  =  2^{-N}  k  \sum_{|\lambda|=N+2k-1} \chi^\lambda (\nu_k) \int_{G/P_\mu} s_\lambda(X).
\end{equation}
We use Lemma \ref{lem:integral} to evaluate $\int s_\lambda(X)$. By Lemma \ref{lem:integral}(1), the only contribution is from $\lambda$ such that $\lambda+\delta_n$ is in either Case E or Case O.

\begin{enumerate}
\item[Case E] If $\lambda = 2\pi + \delta_n$, then from $|\lambda| = \binom{n}{2}+2k$ we must have $|\pi| =  k$. In this case, applying Lemma \ref{lem:integral} implies that the contribution of $\lambda$ to \eqref{eq:D-proof-2} is
\begin{equation}\label{eq:D-even-contribution}
(-1)^{D_\mu} 2^{n-1-N} k \chi^\lambda (\nu_k) s_\pi(X^{(2)}).
\end{equation}

Using the Frobenius character formula \eqref{eq:schur-to-power-2} and Lemma \ref{lem:char-val-MN}, we find that the contribution from Case E to \eqref{eq:D-proof-2} is 
\begin{equation}\label{eq:D-eps-k-reappear}
\Big((-1)^{D_\mu} k 2^{n-1-N} \sum_{|\pi| = k} \chi^{\lambda_E(\pi)}(\nu_k) \frac{\chi^\pi((k))}{k}\Big) p_k^{(2)} = \Big((-1)^{D_\mu} 2^{n-1-N}\sum_{j=0}^{k-1} (-1)^j \chi^{2 \pi_j(k)+ \delta_n}(\nu_k)\Big) p_k^{(2)}.
\end{equation}

\item[Case O] If $\lambda = 2\pi + \delta_n + (1^n)$, then from $|\lambda| = \binom{n}{2}+2k$ we must have $2|\pi| = 2k - n$. This is only possible if $n$ is even, say $n=2m$, and $k \geq m$. Lemma \ref{lem:integral} asserts that the resulting term is a scalar multiple of $\Pf s_\pi(X^{(2)})$. Modulo $I^2$, this vanishes unless $|\pi|=0$ (as $\Pf \in I$). In that non-vanishing case, we must have $k=m$, $\pi = \emptyset$, and $\lambda = \lambda_O(\emptyset) = \delta_{n} + (1^n)$. We then have
\[ 
\ol {\nabla}^\eta_\mu(p_m^{(2)}) \equiv A_m p_m^{(2)} + C_m \Pf \pmod{I^2}, 
\]
where as in \eqref{eq:D-eps-k-reappear} we have
\[
 A_m = (-1)^{D_\mu} 2^{n-1-N}\sum_{j=0}^{m-1} (-1)^j \chi^{2 \pi_j(m)+ \delta_n}(\nu_m),
\]
and
\begin{equation}\label{eq:c_m}
C_m = (-1)^{D_\mu} m 2^{n-1-N} \chi^{\delta_{n} + (1^n)}(\nu_m).
\end{equation}
\end{enumerate}

\subsubsection{Action on $\Pf$} Next we consider $\ol {\nabla}^\eta_\mu(\Pf)$. We have 
\[
\partial_\mu (\Pf) = \frac{1}{2} e_{n-1}(X) = \frac{1}{2}s_{(1^{n-1})}(X).
\]
Thus we are contemplating 
\begin{equation}\label{eq:f-nabla}
\ol {\nabla}^\eta_\mu(\Pf) = 2^{-N-1} \int_{G/P_\mu} p_1(X)^{N} s_{(1^{n-1})}(X).
\end{equation}

We may view $p_1(X) = s_{(1)}(X)$. The Littlewood--Richardson rule gives a formula for the product of two Schur polynomials; we need it only for the case where one partition has a single column, which is given by \emph{Pieri's rule} \cite[Theorem 7.15.7]{Stan2}. Iterating it yields
\[
p_1(X)^N s_\nu(X) = \sum_{\substack{\lambda \supseteq \nu \\ |\lambda|=|\nu|+N}} f^{\lambda/\nu} s_\lambda(X),
\]
where $f^{\lambda/\nu}$ is the number of Standard Young Tableaux (SYT) of skew shape $\lambda/\nu$ with size $N$ (for the definitions, see \cite[\S 7.10]{Stan2}). In these terms, the integrand of \eqref{eq:f-nabla} may be written as 
\[
p_1(X)^{N} s_{(1^{n-1})}(X) = \sum_{\substack{\lambda \supseteq (1^{n-1}) \\ |\lambda|=n-1+N}} f^{\lambda/(1^{n-1})} s_\lambda(X).
\]
Hence we have 
\[
\ol {\nabla}^\eta_\mu(\Pf)  =  2^{-N-1} \sum_{\substack{\lambda \supseteq (1^{n-1}) \\ |\lambda|= \binom{n}{2}+n }} f^{\lambda/(1^{n-1})}  \int_{G/P_\mu} s_\lambda (X).
\]
We analyze the inner integral using Lemma \ref{lem:integral}. The only contributions come from Case E and Case O, and we consider each in turn. 

\begin{enumerate}
\item[Case O]
Since $|\lambda| = \binom{n+1}{2}$, we must have $|\pi| = 0$. Thus the only possibility is $\lambda = \delta_n + (1^n)$. Let us call $\rho_n := \delta_n + (1^n)$. In this case, 
the skew shape $\lambda / (1^{n-1})$ is a disjoint union of the single cell $(n,1)$ and the shifted staircase $\rho_{n-1}$. Therefore, we have 
\[
f^{\rho_n / (1^{n-1})} = \Big(\binom{n}{2}+1\Big) f^{\rho_{n-1}}.
\]

The dimension of the irreducible character $\chi^\lambda$ of $S_N$ is given by the \emph{Hook Length Formula} \cite[Theorem 10.3.2 and Theorem 2.5.2]{Sag01}
\begin{equation}\label{eq:hook-length-formula}
\chi^\lambda(\Id) = \dim \chi^\lambda = \frac{N!}{\displaystyle\prod_{(i,j) \in \text{Young diagram}(\lambda)} h_{\lambda}(i,j)}
\end{equation}
For the partition $\rho_{n-1}$, the Hook Length Formula gives 

\begin{minipage}{0.48\textwidth}
  \centering
  \[
  f^{\rho_{n-1}} = \frac{ \binom{n}{2}!}{\prod_{i=1}^{n-1}(2i-1)^{n-i}}
  \]
\end{minipage}
\hfill
\begin{minipage}{0.48\textwidth}
    \centering
    \scalebox{0.7}{
        \ytableausetup{boxsize=1.8em}
        \begin{ytableau}
            {\scriptstyle 2n-3} & \dots & 5 & 3 & 1 \\
            \vdots & 5 & 3 & 1 \\
            5 & 3 & 1 \\
            3 & 1 \\
            1
        \end{ytableau}
    }
    \captionof{figure}{Hook lengths for the partition $\rho_{n-1}$}
    \label{fig:hook_lengths_rho}
\end{minipage}

Hence the contribution to \eqref{eq:f-nabla} from Case O is 
\begin{equation}\label{eq:D-eps-pfaff}
(-1)^{D_\mu}  2^{n-2-N} \Big(\binom{n}{2}+1\Big) \frac{ \binom{n}{2}!}{\prod_{i=1}^{n-1}(2i-1)^{n-i}}  (\Pf). 
\end{equation}

\item[Case E]
This occurs only if $n=2m$ is even. Since $|\lambda| = \binom{n}{2} + n = 2|\pi| + \binom{n}{2}$, we see that $|\pi| = m$. By \eqref{eq:schur-to-power-2} and the paragraph following it, $s_\pi(X^{(2)}) = 0 \pmod{I^2}$ unless $\pi = \pi_j(m) = (m-j,1^j)$ for $0 \leq j < m$, so the contribution of Case E to $\ol {\nabla}^\eta_\mu(\Pf)$ is
\begin{equation}
\Big((-1)^{D_\mu} \frac{2^{n-2-N}}{m} \sum_{j=0}^{m-1} (-1)^j f^{(2\pi_j(m)+\delta_n) /(1^{n-1})} \Big) p_m^{(2)}. 
\end{equation}
\end{enumerate}

\subsubsection{Summary}

If $n$ is odd, then we have 
\[
\epsilon_k(\Omega, \mu) = (-1)^{D_\mu} 2^{n-1-N} \sum_{j=0}^{k-1} (-1)^j \chi^{2\pi_j(k) + \delta_n}(\nu_k)
\]
and 
\[
\epsilon_{\Pf}(\Omega, \mu) = (-1)^{D_\mu} 2^{n-2-N} \Big(\binom{n}{2}+1\Big) \frac{ \binom{n}{2}!}{\prod_{i=1}^{n-1}(2i-1)^{n-i}}.
\]
If $n=2m$ is even, the same formulas hold for $\epsilon_k$ when $k \neq m$, while the action of $\ol {\nabla}^\eta_\mu$ on the ordered basis $(p_m^{(2)}, \Pf)$ for $\VV_{2n}$ is given by the matrix 
\begin{equation}\label{eq:D-even-matrix}
(-1)^{D_\mu} 2^{n-1-N} \begin{pmatrix}
 \sum_{j=0}^{m-1} (-1)^j \chi^{2\pi_j(m) + \delta_n}(\nu_m) & \frac{1}{n} \sum_{j=0}^{m-1} (-1)^j f^{(2\pi_j(m)+\delta_n) /(1^{n-1})} \\
m  \chi^{\delta_{n} + (1^n)}(\nu_m) & \frac{1}{2} \Big(\binom{n}{2}+1\Big) \frac{ \binom{n}{2}!}{\prod_{i=1}^{n-1}(2i-1)^{n-i}}
\end{pmatrix}.
\end{equation}
We have completed the proof of Theorem \ref{thm:D-main}. \qed 

\subsection{Examples}\label{ssec:D-n=4}
For $G = \PSO_8$, we will calculate the off-diagonal entries of the matrix \eqref{eq:D-even-matrix}, and in particular show that they are both non-vanishing. Here $m=2$ and $ \nu_m = (3, 1^7)$. Note that $(-1)^{N-1} = (-1)^{6}= 1$ in this case. 

The lower left entry is 
\[
C_2 = 2^{-4} \cdot 2 \cdot \chi^{(4,3,2,1)}((3, 1^7)).
\]
In Example \ref{ex:MN-rule-2}, we computed $\chi^{(4,3,2,1)}((3, 1^7)) = -48$, so we find that 
\[
C_2 = 2^{-3} \cdot (-48) = \boxed{-6.}
\]

Next we consider the upper right entry,  
\[ 
B_2 = 2^{-4} \cdot \frac{1}{4} \cdot \left( (-1)^0 f^{(2\pi_0(2)+\delta_4)/(1^3)} + (-1)^1 f^{(2\pi_1(2)+\delta_4)/(1^3)} \right).
\]
\begin{itemize}
\item We have $2\pi_0(2) + \delta_4 = 2(2) + (3,2,1,0)  = (7,2,1)$. The skew shape $(7,2,1)/(1,1,1)$ is the partition $(6,1)$. By the Hook Length formula (see figure below), we have 
\[
f^{(7,2,1)/(1,1,1)} = f^{(6,1)} = 6.
\]
\[
\ytableausetup{boxsize=1.5em} 
\begin{ytableau}
  7  & 5 & 4 & 3 & 2 & 1 \\
  1
\end{ytableau} \hspace{1cm}
\ytableausetup{boxsize=1.5em} 
\begin{ytableau}
  5 & 4 & 3 & 1 \\
  3 & 2 & 1
\end{ytableau}
\]
\item We have $2\pi_1(2)+\delta_4=2(1,1) + (3,2,1,0) = (5,4,1)$. The skew shape $(5,4,1)/(1,1,1)$ is the partition $(4,3)$. By the Hook Length formula (see figure above), 
we have 
\[
f^{(5,4,1)/(1,1,1)} = f^{(4,3)} = 14.
\]
\end{itemize}
In conclusion, we find that 
\[
B_2 = 2^{-6} (f^{(7,2,1)/(1,1,1)} - f^{(5,4,1)/(1,1,1)} ) = 2^{-6} \cdot (6-14) = \boxed{-1/8.} 
\]
Continuing in this way, one finds that in this case \eqref{eq:D-even-matrix} is 
\[
\ol \nabla^\eta_\mu= \begin{pmatrix}	
5/2 & - 1/ 8 \\ 
-6 & 7/2
\end{pmatrix}
\]
The eigenweights are $4$ and $2$. As a sanity check, this matches the results \eqref{eq:D_n-standard-eigenweights}, which must be the case by the triality automorphism of the $D_4$ root system. 

\appendix

\bibliographystyle{amsalpha}
\bibliography{Bibliography}

\end{document}